\documentclass{amsart}
\usepackage{amsmath}
\usepackage{amssymb}
\usepackage{array}
\usepackage{epsfig}
\usepackage{color}
\usepackage{comment}
\usepackage{setspace}
\usepackage{textcomp}
\usepackage{stmaryrd}
\usepackage{multirow}
\usepackage{diagbox}
\usepackage{booktabs}
\usepackage{subcaption}
\usepackage{url}
%\usepackage{showkeys}
%\usepackage{showlabels}
%\usepackage{ulem} % Comment these two back in for using \sout.
%\normalem

\numberwithin{equation}{section}

\newtheorem{theorem}{Theorem}

\newtheorem{remark}{Remark}

% Define standard math operators
\DeclareMathOperator{\Div}{div}

\DeclareMathOperator{\Grad}{\nabla}
\DeclareMathOperator{\ssum}{\textstyle \sum}

% MY MACROs

%\renewcommand{\grbf}[1] {\mathbf{#1}}

%\renewcommand{\baselinestretch}{1.08}

%% math forms

\newcommand{\pmat}[1]{\begin{pmatrix} #1 \end{pmatrix}}

\newcommand{\beq} {\begin{equation}}
\newcommand{\eeq} {\end{equation}}
\newcommand{\bdm} {\begin{displaymath}}
\newcommand{\edm} {\end{displaymath}}

\newcommand{\bit}{\begin{itemize}}
\newcommand{\eit}{\end{itemize}}
\newcommand{\bde}{\begin{description}}
\newcommand{\ede}{\end{description}}

\newcommand{\ben}{\begin{enumerate}}
\newcommand{\een}{\end{enumerate}}
\newcommand{\algn}[1]{\begin{align} #1 \end{align}}
\newcommand{\algns}[1]{\begin{align*} #1 \end{align*}}
\newcommand{\mltln}[1]{\begin{multline} #1 \end{multline}}
\newcommand{\mltlns}[1]{\begin{multline*} #1 \end{multline*}}
\newcommand{\gat}[1]{\begin{gather} #1 \end{gather}}

\newcommand{\barr}{\begin{array}}
\newcommand{\earr}{\end{array}}
\newcommand{\subeqns}[2]{\begin{subequations} \label{#1} \algn{#2} \end{subequations}}
\newcommand{\mc}[1]{\mathcal{#1}}
\newcommand{\LRp}[1]{\left( #1 \right)}

\newcommand{\LRs}[1]{\left[ #1 \right]}
\newcommand{\LRa}[1]{\left< #1 \right>}

\newcommand{\e}{\epsilon}

\newcommand{\I}{\Bbb{I}}

\renewcommand{\div}{\operatorname{div}}

\newcommand{\bs}{\boldsymbol}

\newcommand{\lap}{\Delta}
\newcommand{\pd}{\partial}

\newcommand{\norw}[2]{\left\Vert#1\right\Vert_{#2}}

\newcommand{\n}{\bs{n}}

%% text

%% theorems
% \newtheorem{theorem}{Theorem}[section]
% \newtheorem{prop}[theorem]{Proposition} 
% \newtheorem{lemma}[theorem]{Lemma}
% \newtheorem{cor}[theorem]{Corollary}
% \newtheorem{defn}[theorem]{Definition}
% \newtheorem{rmk}[theorem]{Remark}
% \newtheorem{eg}[theorem]{Example}

% John's stuffs

% Define a few standard abbreviations (keep this to a minimum)
\newcommand{\inner}[2]{\langle #1, #2 \rangle}
\newcommand{\foralls}{\forall \,}

\newcommand{\alb}{\bs{\alpha}}
\newcommand{\ahi}{a_{h,i}}

\newcommand{\ds}{\, \mathrm{d} s}

\newcommand{\Dub}{D_{\ub}}
\newcommand{\Dpt}{D_{\pt}}
\newcommand{\Dpi}{D_{\pii}}
\newcommand{\Dpb}{D_{\pb}}

\newcommand{\dub}{\dot{\bs{u}}}

\newcommand{\dpt}{\dot{p}_{t}}

\newcommand{\dpb}{\dot{\pb}}

\newcommand{\dpii}{\dot{p}_{i}}

\newcommand{\esssup}{\operatorname{ess \, sup}}

\newcommand{\ept}{e_{\pt}}

\newcommand{\eptin}[1]{e_{\pt}^{I,n}}
\newcommand{\epthn}[1]{e_{\pt}^{h,n}}

\newcommand{\epi}{e_{p_i}}

\newcommand{\epb}{e_{\pb}}

\newcommand{\eu}{e_{\bs{u}}}

\newcommand{\fb}{\bs{f}}

\newcommand{\laminv}{\lambda^{-1}}

\newcommand{\half}{\frac{1}{2}}

\newcommand{\Ib}{\bs{I}}

\newcommand{\pb}{\bs{p}}
\newcommand{\pbh}{\bs{p}_h}
\newcommand{\pt}{p_{t}}
\newcommand{\pth}{p_{t,h}}
\newcommand{\pii}{p_{i}}
\newcommand{\piih}{p_{i,h}}

\newcommand{\qb}{\bs{q}}
\newcommand{\Qb}{\bs{Q}}
\newcommand{\Qbh}{\bs{Q}_h}
\newcommand{\qt}{q_{t}}
\newcommand{\qii}{q_{i}}
\newcommand{\Qt}{Q_{t}}
\newcommand{\Qth}{Q_{t,h}}
\newcommand{\Qi}{Q_{i}}
\newcommand{\Qih}{Q_{i,h}}

\newcommand{\R}{{\mathbb R}}

\newcommand{\triang}{\mathcal{T}}

\newcommand{\ub}{\bs{u}}
\newcommand{\ubh}{\bs{u}_h}

\newcommand{\Vbh}{\bs{V}_h}
\newcommand{\Vb}{\bs{V}}
\newcommand{\vb}{\bs{v}}

\newcommand{\wb}{\bs{w}}

\newcommand{\X}{{\mathbb X}}
\newcommand{\xiji}{\xi_{j \leftarrow i} }

\newcommand{\strain}{\varepsilon}

% Sets running headers as well as PDF title and authors
%\headers{Partitioned method for MPET}{Jeonghun J. Lee}

\title[Partitioned methods for MPET]{Unconditionally stable second order convergent partitioned methods for multiple-network poroelasticity} 

%\thanks{Submitted to the editors \today. \funding{The work of J.~J.~Lee has been supported by the European Research Council under the European Union's Seventh Framework Programme (FP7/2007-2013) ERC grant agreement 339643. The work of M.~E.~Rognes and K.-A.Mardal have been supported by the Research Council of Norway under the FRINATEK Young Research Talents Programme through project \#250731/F20 (Waterscape). E.~Piersanti is a doctoral fellow in the Simula-UCSD-University of Oslo Research and PhD training (SUURPh) program, an international collaboration in computational biology and medicine funded by the Norwegian Ministry of Education and Research.}}}

% Authors: full names plus addresses.
\author{Jeonghun J. Lee}
\address{Department of Mathematics, Baylor University, Sid Richardson Science Building, One Bear Place  97328, Waco, Texas 76798, USA }
\email{Jeonghun\_Lee@baylor.edu}

\begin{document}

\maketitle

\begin{abstract}
  In this paper, we consider partitioned numerical methods for quasi-static multiple-network poroelasticity (MPET) equations, 
  generalizations of the Biot model in poroelasticity for multiple pore networks. 
  Two partitioned numerical methods are presented for the equations 
  which split time discretization into solving two subequations, a Lam\'e equation and a system of heat equations, alternatively.
  In contrast to the iterative coupling methods which require multiple iterations at each time step, our numerical methods solve these smaller equations only once at each time step. 
  We prove their unconditional stability and high order convergence in time with a novel error analysis. 
  A number of numerical results are presented to illustrate good performances of these partitioned methods.
\end{abstract}

%% REQUIRED
%\begin{keywords}
%  multiple-network poroelasticity, partitioned numerical methods, nearly incompressible materials
%\end{keywords}
%
%% REQUIRED
%\begin{AMS}
%65M12, 65M15, 65M60%, 92C10
%\end{AMS}

%-------------------------------------------------------------------------------
%-------------------------------------------------------------------------------
\section{Introduction}
In this paper, we consider partitioned numerical methods for a 
family of quasi-static multiple-network
poroelasticity (MPET\footnote{The abbreviation MPET is from the
  term multiple-network poroelastic theory in literature 
  e.g.~\cite{TullyVentikos2011}. Here, we instead refer to the
  multiple-network poroelasticity equations but keep the abbreviation
  for the sake of convenience.}) equations reading as follows: for a
given number of networks $N \in \mathbb{N}$, find the displacement $\ub$
and the network pressures $p_{i}$ for $i = 1, \dots, N$ such that
\begin{subequations}
  \label{eq:mpet}
  \begin{align}
    \label{eq:mpet:1}
    - \Div \mc{C} \strain(\ub) + \sum_{i=1}^N \alpha_{i} \Grad p_{i} &= \fb, \\
    \label{eq:mpet:2}
    s_{i} \dot{p}_{i} + \alpha_{i} \Div \dub - \Div \LRp{ K_{i} \Grad p_{i} } + \xi_{i} (\pb) &= g_{i}, \qquad 1 \le i \le N,
  \end{align}
\end{subequations}
where $\pb$ is the $N$-tuple $(p_1, \cdots, p_N)$. Here $\ub = \ub(x, t)$, $p_i = p_i(x, t)$, $1\le i \le N$ are 
time-dependent functions for $x \in \Omega \subset \R^{d}$ ($d = 2, 3$) and for $t \in [0, T]$.
The operators and parameters are as
follows: $\mc{C}$ is the elastic stiffness tensor, each network $i$ is
associated with a Biot-Willis coefficient $\alpha_{i} \in (0, 1]$,
storage coefficient $s_{i} \geq 0$, and hydraulic conductivity tensor $K_{i} = \kappa_{i}/\mu_{i} > 0$ (where $\kappa_{i}$ and $\mu_{i}$ represent the network permeability and the network fluid
viscosity, respectively). In~\eqref{eq:mpet:1}, $\Grad$, $\strain$, and $\Div$
denote the (column-wise) gradient, the symmetric (row-wise) gradient, and the row-wise divergence, respectively. 
In~\eqref{eq:mpet:2}, $\Grad$ and
$\Div$ are the standard gradient and divergence operators, and the
superposed dot denotes the time derivative. Further, $\fb$ represents
a body force and $g_{i}$ represents sources in network $i$ for $i =
1, \dots, N$, while $\xi_{i}$ represents transfer terms from network
$i$ to other networks.
From a physical point of view, \eqref{eq:mpet} represents the balance of linear momentum and the mass conservation
in a porous, linearly elastic medium permeated by $N$
segregated viscous fluid networks. 

In this paper, we consider the case of an isotropic stiffness tensor
for which
\begin{equation}
  \label{eq:def:isotropic}
  \mc{C} \strain(\ub) = 2 \mu \strain(\ub) + \lambda \Div \ub \Bbb{I}
\end{equation}
where $\mu, \lambda$ are the standard non-negative Lam\'e parameters
and $\Bbb{I}$ denotes the identity tensor. Moreover, we will consider the
case where the transfer terms $\xi_i$, quantifying the transfer out of
network $i$ into the other fluid networks, are proportional to
pressure differences between the networks. More precisely, we assume
that $\xi_i$ takes the form:
\begin{align}
  \label{eq:def:transfer}
  \xi_i(\pb ) = \sum_{i=1}^N \xiji (p_j - p_i), 
\end{align}
where $\xi_{j \leftarrow i}$ are non-negative transfer coefficients
for $1 \le i, j \le N$. We will also assume that these transfer
coefficients are symmetric in the sense that $\xi_{j \leftarrow i} =
\xi_{i \leftarrow j}$, and note that $\xi_{j \leftarrow j}$ is
arbitrary.

The MPET equations have an abundance of both geophysical and biological applications. 
For example, the case with $N = 2$ is the Biot-Barenblatt model which models dual porosity 
property of poroelastic media. 
%Our interest in the multiple-network poroelasticity equations
%primarily stems from the use of these equations in modelling
%interactions between biological fluids and tissue in physiological
%settings. As one example, 
In biomechanics, Tully and Ventikos~\cite{TullyVentikos2011}
considered~\eqref{eq:mpet} with four different networks ($N = 4$) as a macroscopic 
model for the dynamics of fluid flows in brain tissue.
%In this context, the original system of \cite{TullyVentikos2011} represents a macroscopic model of interaction between the different fluid networks in the brain. 
The fluid networks represent the arteries, the
arterioles/capillaries, the veins and the interstitial fluid-filled
extracellular space, and each network may have a different permeability
$\kappa_i$ and different transfer coefficients $\xi_{j \leftarrow i}$
for $1 \le j \le N$, $j \not = i$.
 
%and we note that Showalter and Momken~\cite{ShowalterMomken2002} present an existence analysis for this model, 

While the Biot model has been throughly studied, see e.g.~\cite{%Showalter2000,
AguilarEtAl2008,BaerlandLeeEtAl2017, Lee2017,MuradEtAl1996,OyarzuaRuizBaier2016,PhillipsWheeler2007,    Yi2017}, 
only a few numerical methods for the MPET equations are available. To the best of our knowledge, 
the first numerical method was proposed in \cite{LeeEtAl2018}, and another method is proposed more recently
in \cite{Hong-et-al-2018}. 
Robust preconditioners of numerical methods for MPET equations can be constructed by
extending the block preconditioners of the Biot model (see e.g., \cite{Hong-Kraus-2018,LeeEtAl2017}). 
%-----
%There are enormous research works on numerical methods for Biot's equations
%such as \cite{PhillipsWheeler2007, berger2015stabilized, bause2017space, hu2017nonconforming,rodrigo2017new,murad} to name a few. 
%In recent studies \cite{Yi2017, Yi2014, KorsaweStarke2005,Lee2016}, new formulations of
%Biot's model using three or four unknowns have been studied 
%in order to overcome the volumetric locking for nearly incompressible
%poroelastic materials. This is an important property to use the poroelastic models 
%for biological tissue modelling because most soft biological tissues are 
%modeled as (nearly) incompressible materials for short time-scales and physiological
%scales of pressures. Therefore, it may be crucial for accurate modeling of the
%interaction of the different network pressures in~\eqref{eq:mpet} to
%allow for an elastic material that is almost incompressible and/or
%with (nearly) vanishing storage coefficients, i.e.,~for $1 \ll \lambda
%< +\infty$ and $0 < s_i \ll 1$ in~\eqref{eq:mpet}. Standard two-field
%mixed finite element discretizations of the Biot model, approximating
%the displacement and the fluid pressure only using stable mixed finite elements for 
%the Stokes equations, are well-known to perform poorly in the incompressible
%limit, see e.g.~\cite{Lee2017} and references therein. 
Nevertheless, the problem sizes of the MPET equations are intrinsically large, 
so partitioned numerical methods for time discretization are valuable 
approaches to reduce computational costs.

There are only a few of previous studies on partitioned numerical methods 
for poroelasticity equations. %In [Bukac and Layton et al] 
The conditional stability of various partitioned methods for a dynamic poroelasticity model
was studied in \cite{bukac2015analysis}.
In \cite{ChaabaneRiviere2018} a partitioned numerical method was developed 
for quasi-static poroelasticity equations using the discontinuous Galerkin method with a stabilization technique.
However, the error analysis of the method gives $1/2$ convergence rate of time discretization errors
which is not regarded as the optimal order of time discretization errors. 
%In addition, an additional stabilization term is introduced in the method but determination of
%optimal parameter values in the stabilization term depends on physical parameter values. 

The objective of this paper is to develop and analyze new partitioned numerical schemes for the MPET equations. 
We propose two partitioned numerical schemes which are unconditionally stable without any stabilization terms 
and have second or higher order convergence in time. A novel error analysis will be presented to show the stability and convergence
of the partitioned schemes, and numerical results will be given to illustrate their performances.
We use the formulation proposed in \cite{LeeEtAl2018} with the total pressure, so the implicit constants in the
error estimates are uniformly bounded for arbitrarily large $\lambda >0$ and for small storage coefficients $s_i$'s. 
In contrast to monolithic numerical methods, our partitioned numerical 
schemes solve two subproblems sequentially, one is a linear elasticity equation 
and the other is a system of parabolic equations,
so required computational resources at each solve can be significantly reduced.
We point out that our partitioned schemes are intrinsically different
from the iterative coupling methods, which are equivalent to 
monolithic methods with block triangular preconditioners which use exact LU solvers
in diagonal blocks of the preconditioners.

This paper is organized as follows. Section~\ref{sec:notation}
presents notation and general preliminaries. In Section~\ref{sec:mpet:qs}, we introduce 
a variational formulation with the total pressure %~\eqref{eq:mpet:vf} 
for the quasi-static MPET equations~\eqref{eq:mpet}. 
In Section~\ref{sec:error-analysis} we present two partitioned numerical methods
and prove their convergence with the a priori error estimates.
We present numerical results in Section~\ref{sec:numerics:convergence} to illustrate 
these theoretical results. 
%corroborated by synthetic numerical convergence experiments in
Conclusions and future research directions are highlighted in
Section~\ref{sec:conclusion}.

%-------------------------------------------------------------------------------
%-------------------------------------------------------------------------------
\section{Notation and preliminaries}
\label{sec:notation}

In this paper we use $X \lesssim Y$ to denote an inequality
$X \leq C Y$ with a generic constant $C > 0$ which is independent of
mesh sizes. If needed, we will write $C$
explicitly in inequalities but it can vary across expressions.

\subsection{Sobolev spaces}
Let $\Omega$ be a bounded polyhedral domain in $\R^d$ ($d= 2,3$) with boundary $\partial \Omega$. We let $L^2(\Omega)$ be the set
of square-integrable real-valued functions on $\Omega$. The inner
product of $L^2(\Omega)$ and the induced norm are denoted by
$\inner{\cdot}{ \cdot}$ and $\| \cdot \|$, respectively. For a
finite-dimensional inner product space $\X$, typically $\X = \R^d$,
let $L^2(\Omega; \X)$ be the space of $\X$-valued functions such that
each component is in $L^2(\Omega)$. The inner product of $L^2(\Omega;
\X)$ is naturally defined by the inner product of $\X$ and
$L^2(\Omega)$, so we use the same notation $\inner{\cdot}{\cdot}$ and
$\| \cdot \|_0$ to denote the inner product and norm on $L^2(\Omega;
\X)$. 
%For a non-negative real-valued function on $\Omega$ (or
%symmetric positive semi-definite tensor-valued function on $\Omega$)
%$w$, we also introduce the short-hand notations
%\begin{equation}
%  \label{eq:def:weighted:L2:norm}
%  \inner{u}{v}_{w} = \inner{w u}{v},
%  \quad \| u \|_{w}^2 = \inner{u}{u}_{w} ,
%\end{equation}
%noting that the latter is a norm only when $w$ is strictly positive
%a.e.~on $\Omega$ (or is positive definite a.e.~on $\Omega$).

For a non-negative integer $m$, $H^m(\Omega)$ denotes the standard
Sobolev spaces of real-valued functions based on the $L^2$-norm, and
$H^m(\Omega; \X)$ is defined similarly based on $L^2(\Omega; \X)$. To
avoid confusion with the weighted $L^2$-norms we use $\| \cdot \|_{m}$ to
denote the $H^m$-norm (both for $H^m(\Omega)$ and $H^m(\Omega;
\X)$). For $m \geq 1$, we use $H^m_{0, \Gamma}(\Omega)$ to denote the
subspace of $H^m(\Omega)$ with vanishing trace on $\Gamma \subset
\partial \Omega$, and $H^m_{0, \Gamma}(\Omega; \X)$ is defined
similarly~\cite{Evans1998}. For $\Gamma = \partial \Omega$, we write
$H^m_0(\Omega)$ and analogously $H^m_{0}(\Omega; \X)$.

\subsection{Spaces involving time}

For $T >0$ and a reflexive Banach
space $\mathcal{X}$, let $C^0 ([0, T] ; \mathcal{X})$ denote the set
of functions $f : [0, T] \rightarrow \mathcal{X}$ that are continuous
in $t \in [0, T]$. For an integer $m \geq 1$, we define
\begin{equation*}
  C^m ([0, T]; \mathcal{X}) = \{ f \, | \, \partial^{i}f/\partial t^{i} \in C^0([0, T];\mathcal{X}), \, 0 \leq i \leq m \},
\end{equation*}
where $\partial^i f/\partial t^i$ is the $i$-th time derivative in the
sense of the Fr\'echet derivative in $\mathcal{X}$ (see
e.g.~\cite{Yosida1980}).

For a function $f : [0, T] \rightarrow \mathcal{X}$, the Bochner norm is defined as 
\begin{equation*}
  \| f \|_{L^r((0, T); \mathcal{X})} =
  \begin{cases}
    \left( \int_0^T \| f(s) \|_\mathcal{X}^r \ds \right)^{1/r}, \quad 1 \leq r < \infty, \\
    \esssup_{t \in (0, T)} \| f (t) \|_\mathcal{X}, \quad r = \infty.
  \end{cases}
\end{equation*}
We define $W^{k,r}((0, T); \mathcal{X})$
for a non-negative integer $k$ and $1 \leq r \leq \infty$ as the
closure of $C^k ([0, T]; \mathcal{X})$ with the norm $\| f
\|_{W^{k,r}((0, T);\mathcal{X})} = \sum_{i=0}^k \| \partial^i f /
\partial t^i \|_{L^r((0, T); \mathcal{X})}$.

\subsection{Finite element spaces}

Let $\triang_h$ be a shape-regular triangulation of $\Omega$. For any integer $k \geq 1$, we let
$\mathcal{P}_k(\triang_h)$ denote the space of continuous piecewise
polynomials of order $k$ associated to $\triang_h$, and
$\mathcal{P}_k(\triang_h; \R^d)$ as the space of $d$-tuples with
components in $\mathcal{P}_k(\triang_h)$. We will omit 
$\triang_h$ when it is clear in context. 

\subsection{Parameter values}

We will make the following assumptions on the material parameter
values. First, we assume that the Biot-Willis coefficients $\alpha_i
\in (0, 1]$, $i = 1, \dots, N$, and the storage coefficients $s_i > 0$
are constant in time for $i = 1, \dots, N$. In the analysis, we will
pay particular attention to robustness of estimates with respect to
arbitrarily large $\lambda$ because large $\lambda$ correspond to 
nearly incompressible materials which are common in biomechanical
modelling.
  
We will assume that the hydraulic conductivities $K_i$ are constant in
time, but possibly spatially-varying and that these satisfy standard
ellipticity constraints: i.e.~there exist positive constants $K_i^{-}$
and $K_i^{+}$ such that
\begin{equation*}
  K_i^{-} \leq K_i(x) \leq K_i^{+} \quad \text{a.e.} \;  x \in \Omega.
\end{equation*}
We assume that the transfer coefficients $\xi_{j \leftarrow i}$ are
constant in time and non-negative, i.e.,~$\xi_{j \leftarrow i}(x) \geq
0$ for $1 \leq i, j \leq N$, $x \in \Omega$.

\subsection{Boundary conditions}

We will consider~\eqref{eq:mpet} augmented by the following standard
boundary conditions. First, we assume that the boundary decomposes into
two parts: $\partial \Omega = \Gamma_d \cup \Gamma_t$ with $|\Gamma_d
\cap \Gamma_t| = 0$ and $|\Gamma_d|, |\Gamma_t| > 0$ where
$|\Gamma|$ is the $(d-1)$-dimensional Lebesgue measure of $\Gamma \subset \partial \Omega$.  We use $\n$ to denote
the outward unit normal vector field on $\partial \Omega$.  Relative
to this partition, we consider the homogeneous boundary conditions
\begin{subequations}
  \label{eq:mpet:bcs}
  \begin{align}
    \ub &= 0 \quad \text{ on } \Gamma_d , \\
    \left( \mc{C} \strain(\ub) - \sum_{i=1}^N \alpha_i p_i \I \right) \n &= 0 \quad \text{ on } \Gamma_t , \\
    p_i &= 0 \quad \text{ on } \partial \Omega \quad \text{ for } i = 1, \dots, N . 
  \end{align}
\end{subequations}
The subsequent formulation and analysis can easily be extended to
cover inhomogeneous and other types of boundary conditions with 
suitable modifications.

%\subsection{Key inequalities}
%
%For the space $\Vb = H^1_{0, \Gamma_d}(\Omega)$, Korn's
%inequality~\cite[p.~288]{Braess2001} holds; i.e., there exists a
%constant $C > 0$ depending only on $\Omega$ and $\Gamma_d$ such that
%\begin{align}
%  \label{eq:mpet:korn}
%  \| \vb \|_0 \le C \|\strain(\vb) \|_0 \quad \foralls \vb \in \Vb .  
%\end{align}
%
%Furthermore, for the combination of spaces $\Vb$ and $Q_t =
%L^2(\Omega)$, the following (continuous Stokes) inf-sup condition
%holds: there exists a constant $C > 0$ depending only on $\Omega$ and
%$\Gamma_d$ such that
%\begin{align} 
%\label{eq:mpet:infsup} 
%\sup_{\vb \in \Vb} \frac{\inner{\Div \vb}{q}}{\| \vb \|_0} \ge C \| q \|_0 \quad \foralls q \in L^2(\Omega) .   
%\end{align} 
%Our discretization schemes will also satisfy corresponding discrete
%versions of Korn's inequality and the inf-sup condition with constants
%independent of the discretization.

\subsection{Initial conditions}

The MPET equations~\eqref{eq:mpet} need appropriate initial conditions. 
In particular, in agreement with the
assumption that $c_i > 0$ for $i = 1, \dots, N$, we assume that
initial conditions are given for all $p_i$:
\begin{equation}
  \label{eq:mpet:ics}
    p_i(x, 0) = p_i^0(x), \quad x \in \Omega, \quad i = 1, \dots, N .
\end{equation}
Given such $p_i^0$, we note that we may compute $\ub(x, 0) = \ub^0(x)$
from~\eqref{eq:mpet:1}, which in particular yields a $\Div \ub(x, 0) =
\Div \ub^0(x)$ for $x \in \Omega$. In the following, we will assume that
any initial conditions given are compatible in the sense described
here.

%-------------------------------------------------------------------------------
%-------------------------------------------------------------------------------
\section{The formulation with total pressure}
\label{sec:mpet:qs}

In this section, we review the formulation for the
quasi-static multiple-network poroelasticity equations with the total pressure.
%Inspired by~\cite{OyarzuaRuizBaier2016, LeeEtAl2017}, we introduce an additional variable, namely the \emph{total pressure}. 
In order to be consistent with the pressure definitions in solid mechanics total pressure definition in this paper and the one in \cite{LeeEtAl2018} have different sign.
In the subsequent subsections, we present the augmented governing equations and
introduce a corresponding variational formulation.

%-------------------------------------------------------------------------------
\subsection{Governing equations introducing the total pressure}

Let $\ub$ and $p_i$ for $i = 1, \dots, N$ be solutions
of~\eqref{eq:mpet} with boundary conditions given
by~\eqref{eq:mpet:bcs}, initial conditions given
by~\eqref{eq:mpet:ics} and recall the isotropic stiffness tensor
assumption, cf.~\eqref{eq:def:isotropic}. 

For simplicity, we denote $\alb = (\alpha_1, \dots,
\alpha_N)$ and $\pb = (p_1, \dots, p_N)$, and we write
%\begin{equation*}
$\alb \cdot \pb = \ssum_{i=1}^N \alpha_i p_i$. 
%\end{equation*}
Introducing the total pressure $\pt$ defined as
%\begin{equation}
%  \label{eq:def:p_0}
$-\pt = \lambda \Div \ub - \alb \cdot \pb$,
%\end{equation}
we have 
\begin{equation}
  \label{eq:divu:as:p0}
  \Div \ub = \laminv \LRp{ -\pt + \alb \cdot \pb } .
\end{equation}

Inserting~\eqref{eq:divu:as:p0} and its time-derivative
into~\eqref{eq:mpet:2}, we obtain an augmented system of quasi-static
multiple-network poroelasticity equations: for $t \in (0, T]$, find
  the displacement vector field $\ub$ and the pressure scalar fields
$\pt$ and $\pb$ such that
\begin{subequations}
  \label{eq:mpet:tp}
  \begin{align}
    \label{eq:mpet:tp1} - \Div \left ( 2 \mu \strain (\ub) - \pt \Bbb{I} \right ) &= \fb, \\
    \label{eq:mpet:tp2} - \Div \ub - \laminv \pt + \laminv \alb \cdot \pb &= 0, \\ 
    \label{eq:mpet:tp3} s_i \dot{p}_i + \alpha_i \laminv (-\dpt + \alb \cdot \dot{\pb}) - \Div (K_i \nabla p_i) + \xi_i(\pb) &= g_i, \quad i = 1, \dots, N .
  \end{align}
\end{subequations} 
We note that $\pt(x, 0)$ can be computed from~\eqref{eq:mpet:ics}
and the definition of $\pt$.

%-------------------------------------------------------------------------------
\subsection{Variational formulation}

For $\Gamma_d \subset \pd \Omega$ satisfying $0 < |\Gamma_d | < |\pd \Omega|$, using the notations introduced in Section~\ref{sec:notation}, let
\begin{equation}
\Vb = H^1_{0, \Gamma_d} (\Omega; \R^d), \quad \Qt = L^2(\Omega), \quad Q_i =
H^1_0(\Omega) \quad i = 1, \dots, N .
\end{equation}
If $\Gamma_d = \pd \Omega$, then $\Qt = L_0^2(\Omega)$, the space of all mean-value zero functions in $L^2(\Omega)$. 
If $\Gamma_d = \emptyset$, then $\Vb$ must be the subspace of $H^1(\Omega; \R^d)$ orthogonal to the space of rigid motions on $\Omega$. 
For simplicity of presentation we will assume that $0 < |\Gamma_d | < |\pd \Omega|$ in the rest of this paper.
We also use $\Qb = Q_1 \times \dots \times Q_N$ for simplicity. Here we define the norms of $\norw{\cdot}{\Vb}$, $\norw{\cdot}{\Qt}$ as
\algns{
\norw{\vb}{\Vb}^2 =  \LRa{ 2\mu \strain(\vb), \strain(\vb) } , \qquad \norw{\qt}{\Qt}^2 = \LRa{ (2\mu)^{-1} \qt, \qt } .
}
Throughout the paper we assume that $\mu \lesssim \lambda$, therefore $\| \qt \|_{\laminv} \le C \| \qt \|_{\Qt}$ holds with a constant $C$ which is uniformly bounded above for arbitrarily large $\lambda$.

Multiplying~\eqref{eq:mpet:tp} by test functions and integrating by
parts with boundary conditions given by~\eqref{eq:mpet:bcs} and
initial conditions given by~\eqref{eq:mpet:ics} yield the following
variational formulation: given a compatible initial data $(\ub^0, \pt^0, \pb^0)$ satisfying \eqref{eq:mpet:tp1}, \eqref{eq:mpet:tp2}, and given
$\fb$ and $g_i$'s for $i = 1, \dots, N$, find $\ub \in C^0([0, T]; \Vb)$, $\pt \in
C^0([0,T]; \Qt) \cap C^1((0, T], \Qt)$, and $\pb \in
C^0([0,T]; \Qb) \cap C^1((0, T], \Qb)$ such that
\begin{subequations}
  \label{eq:mpet:vf}
  \begin{alignat}{2}
  \label{eq:mpet:vf:1}
    \inner{2 \mu \strain(\ub)}{\strain(\vb)} - \inner{\pt }{\Div \vb} &= \inner{\fb}{\vb} &&\quad \foralls v \in \Vb, \\
    \label{eq:mpet:vf:2}
    - \inner{\Div \ub}{\qt} - \LRa{\laminv \pt, \qt } + \inner{\laminv \alb \cdot \pb}{\qt} &= 0 &&\quad \foralls \qt \in Q_t , \\
    \inner{s_i \dot{p}_i + \alpha_i \laminv ( -\dpt + \alb \cdot \dot{\pb}  + \xi_i(\pb)}{\qii} + \LRa{K_i \Grad \pii, \Grad \qii } &= \inner{g_i}{\qii}
    &&\quad \foralls \qii \in Q_i
  \end{alignat}
\end{subequations}
for $i = 1, \dots, N$ and such that $\ub(\cdot, 0) = \ub^0(\cdot)$, $\pt(\cdot, 0) = \pt^0(\cdot)$, and
$p_i(\cdot, 0) = p_i^0(\cdot)$ for $i = 1, \dots, N$.

\section{Partitioned numerical methods with error analysis} \label{sec:error-analysis}

In this section, we present two partitioned time discretization algorithms and their a priori error analyses. 
In Subsection \ref{subsec:prelim}, we introduce notations and definitions for the algorithms and the associated error analyses.
In addition, we prove estimates of initial errors which are commonly necessary for error analyses of the two algorithms. 
In Subsection \ref{subsec:etd}, the first one, called ``elasticity-then-diffusion'' algorithm, will be defined 
and the error analysis will be presented with the full details. 
In Subsection \ref{subsec:dte}, we will present another algorithm, so called ``diffusion-then-elasticity'' algorithm. 
Since the error analysis of the diffusion-then-elasticity method is similar to the one in Subsection \ref{subsec:etd}, we will state main intermediate estimates in the analysis and details will be omitted.

In our error analyses the first method (elasticity-then-diffusion) methods has the second order convergence in time whereas the second method (diffusion-then-elasticity) has the third order convergence in time. In spite of this lower order convergence rate, the first method can be advantageous when local mass conservation is significant because then numerical solutions at the same time step are used in the mass conservation equations, so a locally mass conservative numerical flux can be easily recovered by post-processing. This is not the case in the second method because the numerical solutions of $\pb$ and $\pt$ are at different time steps as we will see in Subsection \ref{subsec:dte}.

\subsection{Preliminaries} \label{subsec:prelim}
In our numerical algorithms, we take time steps $t_n = n \lap t$ for $n = 0, 1, \ldots$ and given time step size $\lap t >0$. 
For a function $\sigma$ in $C^0([0, T]; \mc{X})$, we use $\sigma^n$ to denote $\sigma(t_n)$.
$\sigma_h$ is a discrete solution of $\sigma$ if $\sigma$ is a variable in equations and $\sigma_h^n$ is the $n$-th time step solution of $\sigma_h$. 
For error terms we define $e_\sigma^n = \sigma^n - \sigma_h^n$. For all quantities with superscript indices, 
we will use the convention 
\algns{
\sigma^{n+\half} = \frac{\sigma^n + \sigma^{n+1}} 2 .
}
For the finite element discretization of $\ub$ and $\pt$ we use a pair of finite element spaces 
$\Vbh \subset \Vb$, $\Qth \subset \Qt$
which satisfy the inf-sup condition 
\begin{align} 
\label{eq:mpet:infsup} 
\sup_{\vb \in \Vbh} \frac{\inner{\Div \vb}{q}}{\norw{\vb}{\Vb}} \ge C \norw{q}{\Qt} \quad \foralls q \in \Qth    
\end{align} 
with $C>0$ independent of the mesh sizes. For discretization of $p_i$'s for $i=1, \ldots, N$, we use a 
finite element method for the primal form of the Poisson equation yielding a symmetric bilinear form
for $\LRa{K_i \nabla p_i, \nabla q_i}$. Such methods include the continuous Galerkin (CG) methods, the discontinuous or enriched Galerkin 
methods (DG or EG) with symmetric bilinear forms (see e.g., \cite{Arnold-82, Arnold-Brezzi-et-al-02, Lee-Lee-Wheeler-16}), and the finite element space is denoted by $\Qih$ for $i=1, \ldots, N$. 
In order to keep this generality, we use $\ahi( \cdot , \cdot )$ to denote the discrete bilinear form corresponding to $\LRa{K_i \nabla \pii, \nabla \qii}$. The corresponding discrete norm $\| \cdot \|_{\ahi}$ is defined by
\algns{
\| \qii \|_{\ahi}^2 = \ahi(\qii, \qii) .
}
The convergence orders of numerical solutions depend on the approximation properties of $\Vbh$, $\Qth$, $\Qih$ for $i=1, \ldots, N$. 
For simplicity of presentation we assume that the approximation properties of $\Vbh$ and $\Qth$ satisfy
\algns{
\inf_{\vb \in \Vbh} \| \ub - \vb \|_{\Vb} \lesssim h^{k_{\ub}} \| \ub \|_{k_{\ub}+1} , \qquad \inf_{\qt \in \Qt} \| \pt - \qt \|_{\Qt} \lesssim h^{k_{\ub}} \| \pt \|_{k_{\ub}}
}
for a positive integer $k_{\ub}$ if $\ub$ and $\pt$ are sufficiently regular. This assumption holds for the family of Taylor--Hood elements \cite{Boffi94,Boffi97,BrezziFalk91,TaylorHood73} ($\mc{P}_{r+1}(\R^d) - \mc{P}_r$, $r \ge 1$) with $k_{\ub} = r+1$ and for the MINI element \cite{ArnoldBrezziFortin84} with $k_{\ub} =1$. Similarly, assuming $Q_{1,h} = \cdots = Q_{N,h}$ for simplicity, we assume that 
\algns{
\inf_{\qii \in \Qih} \| \pii - \qii \|_{\ahi} \lesssim h^{k_p} \| \pii \|_{k_{p}+1} \qquad 1 \le i \le N
}
for $k_p \ge 1$ and for sufficiently regular $\pb$. This holds for CG methods with $\Qih = \mc{P}_{k_p}$, or DG/EG methods with piecewise polynomials of degree $k_p$. 
For convenience we define two additional bilinear forms 
\algn{
\label{eq:S-form} & S(\pb, \qb) := \sum_{i=1}^N  \LRa{ s_i \pii, \qii } + \LRa{ \laminv \alb \cdot \pb, \alb \cdot \qb} , \\
\label{eq:A-form} & A(\pb, \qb) := \sum_{i=1}^N  \LRp{ \LRa{ \xi_i (\pb), \qii} + \ahi(\pii, \qii) } ,
}
with the corresponding norms $\| \cdot \|_S$ and $\| \cdot \|_A$. By a discrete Poincar\'e inequality 
$\| \qb \|_S \lesssim \| \qb \|_A$ holds for all $\qb \in \Qbh$ with an implicit constant which is uniform for small $s_i$'s
and arbitrarily large $\lambda$.

The continuous solutions (with a regularity assumption $\pii \in H^s(\Omega)$ for $s > 3/2$ if DG or EG is used for the discretization of $\Qi$) satisfy the variational equations:
\subeqns{eq:method1-conti}{
\label{eq:method1-conti1} &\LRa{ 2\mu \strain(\ub^{n}), \strain(\vb) } - \LRa{\pt^{n}, \div \vb} = \LRa{\fb^{n}, \vb}, \\
\label{eq:method1-conti2} &-\LRa{\Div \ub^{n}, \qt} - \LRa{\laminv \pt^{n}, \qt} = -\LRa{\laminv \alb \cdot \pb^{n}, \qt}, \\
\label{eq:method1-conti3} &-\LRa{ s_i \dpii^{n+\half} , \qii} - \LRa{\alpha_i \laminv \alb \cdot \dpb^{n+\half} + \xi_i \LRp{ \pb^{n+\half} } , \qii } \\
&\notag \quad  - \ahi \LRp{ \pii^{n+\half} , \qii } = -\LRa{ g^{n+\half} , \qii} - \LRa{\alpha_i \laminv \dpt^{n+\half} , \qii}, \qquad  1 \le i \le N,
%\label{eq:method1-conti3} &\LRa{ s_i \dpii^{n+1} , \qii} + \LRa{\alpha_i \laminv \alb \cdot \dpb^{n+1} + \xi_i \LRp{ \pb^{n+1} } , \qii } + \ahi \LRp{\pii^{n+1} , \qii } \\
%&\notag \qquad = \LRa{ g^{n+1}, \qii} - \LRa{\alpha_i \laminv \dpt^{n+1} , \qii}, \qquad  1 \le i \le N,
}
for $\vb \in \Vbh$, $\qt \in \Qth$, $\qii \in \Qih$.
For the error analysis we split error terms into two parts using appropriate interpolation operators. 
The interpolation operators for the variables $\ub$, $\pt$, $\pii$ for $i=1, \ldots, N$ will be denoted by $\Pi_h^{\Vb}$, $\Pi_h^{\Qt}$, 
$\Pi_h^{\Qi}$ for $i=1, \ldots, N$. 
Specifically, we define $(\Pi_h^{\Vb} \ub, \Pi_h^{\Qt} \pt)$ as the solution of the Lam\'{e} equation
\algns{ 
\LRa{ 2 \mu \strain (\Pi_h^{\Vb} \ub), \strain(\vb) } - \LRa{\Pi_h^{\Qt} \pt, \div \vb} &= \LRa{ 2 \mu \strain (\ub), \strain(\vb) } - \LRa{\pt, \div \vb} , & & \vb \in \Vbh, \\
- \LRa{\Div \Pi_h^{\Vb} \ub, \qt } - \LRa{ \laminv \Pi_h^{\Qt} \pt, \qt} &= - \LRa{\Div \ub, \qt } - \LRa{ \laminv \pt, \qt} , & & \qt \in \Qth .
}
For simplicity, we use $\Pi_h^{\Qb} \pb$ to denote the $N$-tuple $(\Pi_h^{Q_1} p_1, \ldots, \Pi_h^{Q_N} p_N )$. 
Then $\Pi_h^{\Qb} \pb$ is defined as the solution of the elliptic system
\algns{ 
\sum_{i=1}^N \LRs{ \ahi(\Pi_h^{\Qi} \pii, \qii) + \LRa{\xi_i(\Pi_h^{\Qb} \pb), \qii } } = \sum_{i=1}^N \LRs{ \ahi \LRp{ \pii, \qii} + \LRa{\xi_i(\pb), \qii } }, \qquad \qii \in \Qih
}
for $1 \le i \le N$. 
Well-posedness of this problem is not difficult to show from the property of $\xi_i$ 
\algns{
\sum_{1 \le i \le N} \LRa{\xi_i(\pb), \qii} = \sum_{1 \le i, j \le N} \half \LRa{\xiji (\pii - p_j), \qii - q_j} ,
}
which, in particular, gives 
\algns{
\sum_{1 \le i \le N} \LRa{\xi_i(\pb), \pii} = \sum_{1 \le i, j \le N} \half \LRa{\xiji (\pii - p_j), \pii - p_j} \ge 0 .
}
By a standard error analysis argument, we can show that these interpolation operators have optimal approximation properties
in $H^1$ norm for $\Vbh$, in $L^2$ norm for $\Qth$, and in (discrete) $H^1$ norm for $\Qih$, i.e., 
\algn{
\label{eq:intp-estm1} \| \ub - \Pi_h^{\Vb} \ub \|_{\Vb} + \| \pt - \Pi_h^{\Qt} \pt \|_{\Qt} &\lesssim h^{k_{\ub}} \LRp{ \| \ub \|_{k_{\ub}+1} + \| \pt \|_{k_{\ub}} }, \\
\label{eq:intp-estm2} \| \pb - \Pi_h^{\Qb} \pb \|_A + \| \pb - \Pi_h^{\Qb} \pb \|_S &\lesssim h^{k_p} \sum_{i=1}^N \| \pii \|_{k_p +1} 
}
for sufficiently regular $\ub$, $\pt$, $\pb$. 

For the error analysis we use the notation 
\algn{ \label{eq:err-decomp}
e_{\ub}^n &= \ub^n - \ub_h^n = \ub^n - \Pi_h^{\Vb} \ub^n + \Pi_h^{\Vb} \ub^n - \ub_h^n =: e_{\ub}^{I,n} + e_{\ub}^{h,n}, \\
e_{\pt}^n &= \pt^n - \pth^n = \pt^n - \Pi_h^{\Qt} \pt^n + \Pi_h^{\Qt} \pt^n - \pth^n =: e_{\pt}^{I,n} + e_{\pt}^{h,n}, \\
e_{\pii}^n &= \pii^n - \piih^n = \pt^n - \Pi_h^{\Qi} \pii^n + \Pi_h^{\Qi} \pii^n - \piih^n =: e_{\pii}^{I,n} + e_{\pii}^{h,n}, \quad 1 \le i \le N.
}
%where $\Pi_h^{\Vb} \ub^n$ is an interpolation of $\ub^n$ which will be defined below. We define $e_{\pt}^n$, $e_{\pt}^{I,n}$, $e_{\pt}^{h,n}$ and $e_{\pii}^n$, $e_{\pii}^{I,n}$, $e_{\pii}^{h,n}$ for $i=1, \ldots, N$ similarly with interpolation operators $\Pi_h^{\Qt}$, $\Pi_h^{\Qi}$ for $i = 1, \ldots, N$, which are defined below as well.
We also define 
\algns{
D_{\sigma}^{n} := e_{\sigma}^{h, n+1} - e_{\sigma}^{h, n} %\qquad 
%S_{\sigma}^{n} = \half \LRp{e_{\sigma}^{h, n+1} + e_{\sigma}^{h, n} } .
}
for variable $\sigma$. 

Throughout the paper we assume that $(\ubh^{0}, \pth^{0}, \pbh^{0})$, our discrete initial data, satisfies the following:
%is an optimal approximation of $(\ub^0, \pt^0, \pb^0)$ in $\| \cdot \|_{\Vb}$, $\| \cdot \|_{\Qt}$, $\| \cdot \|_{A}$ norms, i.e., 
\gat{ 
\label{eq:initial-assumption1} \pb_h^0 = \Pi_h^{\Qb} \pb^0, \qquad \| \ub^0 - \ubh^{0} \|_{\Vb} + \| \pt^0 - \pth^{0} \|_{\Qt} \lesssim h^{k_{\ub}} \LRp{\| \ub^0 \|_{k_{\ub}+1} + \| \pt^0 \|_{k_{\ub}} }. %\quad \| \pb^0 - \pbh^{0} \|_A \lesssim h^{k_{\pb}} 
}
As a consequence of \eqref{eq:initial-assumption1} and the inequality $\| \cdot \|_S \lesssim \| \cdot \|_A$, we have 
\algn{ \label{eq:init-h-approx}
\| \eu^{h,0 } \|_{\Vb} + \| \ept^{h,0} \|_{\Qt} \lesssim h^{k_{\ub}} , \quad \| \epb^{h,0} \|_S + \| \epb^{h,0} \|_A \lesssim h^{k_{\pb}} 
}
in which the implicit constants of these inequalities depend on the norms of $\ub^0$, $\pt^0$, $\pb^0$. 

To define our partitioned algorithms we need $(\ubh^1, \pth^1, \pbh^1)$ as well. To obtain this, we use the monolithic method combining the backward Euler and Crank--Nicolson methods, i.e.,  $(\ubh^1, \pth^1, \pbh^1)$ satisfies
\subeqns{eq:monolithic-disc}{ 
\label{eq:monolithic-disc1} & \LRa{ 2\mu \strain(\ubh^{1}), \strain(\vb) } - \LRa{\pth^{1}, \div \vb} = \LRa{\fb^{1}, \vb}, \\
\label{eq:monolithic-disc2} & -\LRa{\Div \ubh^{1}, \qt} - \LRa{\laminv \pth^{1}, \qt} + \LRa{\laminv \alb \cdot \pbh^{1}, \qt} = 0, \\
\label{eq:monolithic-disc3} -\LRa{ s_i \frac{\piih^{1} - \piih^0 }{\lap t} , \qii} &- \LRa{- \alpha_i \laminv \frac{\pth^{1} - \pth^{0}}{\lap t} + \alpha_i \laminv \alb \cdot \frac{\pbh^{1} - \pbh^0}{\lap t} + \xi_i \LRp{  \piih^{\half} } , \qii } \\
&\notag - \ahi \LRp{ \piih^{\half} , \qii } = - \LRa{ g_i^{\half} , \qii} , \quad 1 \le i \le N .
}
For the error estimates, we need estimates of some error terms in the beginning time steps. 
\begin{theorem} \label{thm:init-estm} For given compatible initial data $(\ub^0, \pt^0, \pb^0)$ and given $\fb$ and $\{ g_i \}_{i=1}^N$, suppose that $(\ub, \pt, \pb)$ is the solution of \eqref{eq:mpet:tp}. For numerical initial data $(\ubh^0, \pth^0, \pbh^0)$ satisfying \eqref{eq:initial-assumption1} suppose that $(\ubh^1, \pth^1, \pbh^1)$ is obtained by \eqref{eq:monolithic-disc}. Then we have 
\algn{
\label{eq:initD-estm} \| \Dub^0 \|_{\Vb}^2 + \| \Dpt^0 \|_{\Qt}^2 + \| \Dpb^{0} \|_{S}^2 + \lap t \| \epb^{h,1} \|_A^2 &\lesssim (\lap t)^6 + h^{2k_{\pb}} + h^{2 k_{\ub}}  , \\
\label{eq:init-estm} \| \eu^{h,1} \|_{\Vb} + \| \ept^{h,1} \|_{\Qt} + \| \epb^{h,1} \|_S &\lesssim (\lap t)^3 + h^{k_{\pb}} + h^{k_{\ub}} ,
}
and the implicit constants in these inequalities depend on the norms of the exact solutions and are uniformly bounded above for small $s_i$'s and arbitrarily large $\lambda$.
\end{theorem}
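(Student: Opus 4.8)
The plan is to carry out, for the single start-up step, the stability analysis of the monolithic scheme \eqref{eq:monolithic-disc} and then turn it into an a priori bound by estimating the consistency error. First I would derive the error equations: subtracting \eqref{eq:monolithic-disc} from the exact relations \eqref{eq:method1-conti} taken at $n=0$ (so that every half-integer superscript is a genuine average and the relations hold exactly for the continuous solution), inserting the splitting \eqref{eq:err-decomp}, and using the defining properties of the elliptic projections $\Pi_h^{\Vb},\Pi_h^{\Qt},\Pi_h^{\Qb}$. Since the momentum and volumetric constraint are imposed exactly at $t_1$, the momentum error relation is homogeneous and the constraint relation carries only an $O(h^{k_p})$ interpolation term; in the mass relation the Galerkin orthogonality of the $A$-projection annihilates the elliptic and transfer interpolation contributions while the bilinearity of $\ahi$ and $\xi_i$ makes the averaged spatial terms cancel, so that the right-hand side reduces to (i) Crank--Nicolson time-truncation errors of size $O((\lap t)^2)$ (for a sufficiently time-regular exact solution), (ii) interpolation-in-time differences, of sizes $O(h^{k_p})$ and $O(h^{k_{\ub}})$ for the pressures and the total pressure, and (iii) initial residuals controlled by $\| \eu^{h,0} \|_{\Vb} + \| \ept^{h,0} \|_{\Qt} \lesssim h^{k_{\ub}}$ from \eqref{eq:init-h-approx}; recall also $\epb^{h,0}=0$ and hence $\Dpb^0=\epb^{h,1}$.

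Next comes the energy argument. I would test the momentum balance at $t_1$ with the displacement increment $\Dub^0$, the time-differenced constraint with the total-pressure error $\ept^{h,1}$, and the mass balance with $\epb^{h,1}=\Dpb^0$, and sum. The difficulty is the indefinite mechanics--flow coupling through $\pt$ and $\laminv \alb\cdot\pb$. Mirroring the total-pressure energy identity, whose continuous form collects this coupling into the exact derivative $\laminv\ddt\LRa{\alb\cdot\pb, \pt}$, I would reorganize the discrete cross terms into the telescoping increment of the modified energy $\mathcal{E}^{n} = \tfrac12\| \eu^{h,n} \|_{\Vb}^2 + \tfrac12\| \ept^{h,n} \|_{\laminv}^2 + \tfrac12\| \epb^{h,n} \|_S^2 - \LRa{\laminv\alb\cdot\epb^{h,n}, \ept^{h,n}}$, for which $\mathcal{E}^0\lesssim h^{2k_{\ub}}$ since $\epb^{h,0}=0$. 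After multiplying the mass contribution by $\lap t$, this leaves the coercive quantities $\| \Dpb^0 \|_S^2$ and $\tfrac{\lap t}{2}\| \epb^{h,1} \|_A^2$ together with a single residual second-order coupling term $\LRa{\laminv\alb\cdot\Dpb^0, \Dpt^0}$.

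It then remains to absorb this residual coupling and the consistency terms uniformly in the parameters. I would estimate $\| \Dpt^0 \|_{\Qt}$ (hence $\| \Dpt^0 \|_{\laminv}$, using $\| \cdot \|_{\laminv}\le C\| \cdot \|_{\Qt}$ with $C$ independent of $\lambda$) by applying the inf--sup condition \eqref{eq:mpet:infsup} to the time-differenced momentum equation, which gives $\| \Dpt^0 \|_{\Qt}\lesssim \| \Dub^0 \|_{\Vb}+h^{k_{\ub}}$; the coupling term is then removed by Young's inequality against $\| \Dub^0 \|_{\Vb}^2$ and $\| \Dpb^0 \|_S^2$, using the sharp bound $\| \lambda^{-1/2}\alb\cdot\Dpb^0 \|\le \| \Dpb^0 \|_S$ to keep the constants uniform for small $s_i$, with the discrete Poincar\'e inequality $\| \cdot \|_S\lesssim \| \cdot \|_A$ supplying control where required. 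The Crank--Nicolson truncation enters the mass equation paired with $\| \Dpb^0 \|_S$, so after the factor $\lap t$ and Young's inequality it produces the $(\lap t)^6$ contribution, while the interpolation-in-time and initial residuals produce $h^{2k_p}+h^{2k_{\ub}}$; this yields \eqref{eq:initD-estm}.

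Finally \eqref{eq:init-estm} follows from \eqref{eq:initD-estm} by the triangle inequality, writing $\eu^{h,1}=\Dub^0+\eu^{h,0}$, $\ept^{h,1}=\Dpt^0+\ept^{h,0}$ and $\epb^{h,1}=\Dpb^0$ and invoking \eqref{eq:init-h-approx}. I expect the main obstacle to be precisely the reorganization and absorption of the indefinite coupling so that the coercivity of $\| \Dpb^0 \|_S^2$ is preserved and every constant stays bounded uniformly for small $s_i$ and arbitrarily large $\lambda$; the inf--sup control of $\Dpt^0$ and the modified energy $\mathcal{E}$ are the two structural devices that make this possible, and keeping the $S$- versus $A$-norm bookkeeping sharp is what pins down the $(\lap t)^6$ rather than a weaker power of $\lap t$.
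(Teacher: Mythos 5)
Your strategy for \eqref{eq:initD-estm} is essentially the paper's: error equations built from the elliptic projections, an energy identity obtained by testing with the increments, inf-sup control of the total-pressure increment, and Crank--Nicolson consistency terms of size $(\lap t)^3$ whose squares produce the $(\lap t)^6$. One genuine difference is your derivation of \eqref{eq:init-estm} directly from \eqref{eq:initD-estm} by the triangle inequality, using $\epb^{h,0}=0$ (so $\epb^{h,1}=\Dpb^{0}$) together with \eqref{eq:init-h-approx}. The paper instead runs a second energy identity with the test functions $\eu^{h,0}+\eu^{h,1}$, $-\ept^{h,0}-\ept^{h,1}$, $-e_{p_i}^{h,0}-e_{p_i}^{h,1}$ and a case distinction around \eqref{eq:case1}; under the stated hypothesis \eqref{eq:initial-assumption1} your shortcut is valid and noticeably shorter.

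The one place your plan is under-specified is the absorption of the mechanics--flow coupling, and this is not merely cosmetic. In the paper the coupling never survives as a residual term: testing the time-differenced constraint with $-\Dpt^{0}$ and the mass equations with $-D_{p_i}^{0}$ makes the cross terms combine \emph{exactly} into the perfect square $\| \Dpt^{0}-\alb\cdot\Dpb^{0} \|_{\laminv}^2$, and $\| \Dpb^{0} \|_S$ is recovered only afterwards via $\| \alb\cdot\Dpb^{0} \|_{\laminv}\le\| \Dpt^{0} \|_{\laminv}+\| \Dpt^{0}-\alb\cdot\Dpb^{0} \|_{\laminv}$. Your version keeps $\| \Dpb^{0} \|_S^2$ as a coercive term with coefficient one and a leftover $\LRa{\laminv\alb\cdot\Dpb^{0}}{\Dpt^{0}}$ to be removed by Young's inequality; there is no room to spare there, since any weighting that preserves the $\| \Dpb^{0} \|_S^2$ term pushes a possibly large constant onto $\| \Dpt^{0} \|_{\Qt}^2$ (or onto $\| \Dub^{0} \|_{\Vb}^2$ through your inf-sup bound), and absorbing it then depends on how that constant compares with the Korn/inf-sup constant $C_0$ of \eqref{eq:stokes-stability}. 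Moreover, with your literal test functions (momentum at $t_1$ against $\Dub^{0}$, time-differenced constraint against $\ept^{h,1}$) the $\laminv$-cross terms come out with mismatched coefficients (a factor $2$ on the cross term against a factor $\tfrac12$ on $\| \ept^{h,1} \|_{\laminv}^2$), so the resulting quadratic form is not obviously bounded below. Switching to the paper's increment test functions so that the coupling closes into the exact square fixes this; with that adjustment the rest of your argument, including the $S$- versus $A$-norm bookkeeping and the uniformity in $s_i$ and $\lambda$, goes through.
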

\begin{proof}
From $- \div \ub^0 - \laminv \pt^0 = - \laminv \alb \cdot \pb^0$ we see that 
\algns{
- \div e_{\ub}^0 - \laminv \ept^0 &= - \laminv \alb \cdot \pb^0 + \div \ubh^0 + \laminv \pth^0 = - \laminv \alb \cdot \epb^0 + R_h^0
}
where $R_h^0 := \div \ubh^0 + \laminv \pth^0 - \laminv \alb \cdot \pbh^0$.
Using this and the interpolation operators defined above we have
\algn{
\label{eq:n0-eq1} \LRa{ 2\mu \strain(\eu^{h, 0}), \strain(\vb) } - \LRa{\ept^{h, 0}, \div \vb} &=  \LRa{2 \mu \strain(\eu^{0}), \strain(\vb)} - \LRa{ \ept^{0}, \div \vb} , \\
\label{eq:n0-eq2} - \LRa{\Div \eu^{h,0}, \qt} - \LRa{\laminv \ept^{h,0}, \qt} &= - \LRa{\laminv \alb \cdot (\epb^{h,0} + \epb^{I,0}) - R_h^0, \qt} .
%\notag &\quad + \LRa{\div \eu^{0}, \qt} + \LRa{\laminv \ept^{0}, \qt}  ,
}
Moreover, the differences of \eqref{eq:monolithic-disc1}, \eqref{eq:monolithic-disc2} and \eqref{eq:method1-conti1}, \eqref{eq:method1-conti2} with $n=1$ with the interpolation operators $\Pi_h^{\Vb}$ and $\Pi_h^{\Qt}$, give 
\algn{
\label{eq:n1-eq1} \LRa{ 2\mu \strain(\eu^{h, 1}), \strain(\vb) } - \LRa{\ept^{h, 1}, \div \vb} &= 0, \\
\label{eq:n1-eq2} - \LRa{\Div \eu^{h,1}, \qt} - \LRa{\laminv \ept^{h,1}, \qt} &= - \LRa{\laminv \alb \cdot (\epb^{h,1} + \epb^{I,1}), \qt} .
}
The differences of these equations are
\algn{
\label{eq:tmp1} \LRa{ 2\mu \strain(\Dub^{0}), \strain(\vb) } -  \LRa{\Dpt^{0}, \div \vb} &= - \LRa{2 \mu \strain(\eu^{0}), \strain(\vb)} + \LRa{ \ept^{0}, \div \vb}, \\
\label{eq:tmp2} - \LRa{\Div \Dub^{0}, \qt} - \LRa{\laminv \Dpt^{0}, \qt} &= - \LRa{\laminv \alb \cdot (\Dpb^{0} + \epb^{I,1} - \epb^{I,0} ) + R_h^0 , \qt} .
%\notag &\quad - \LRa{\div \eu^{0}, \qt} - \LRa{\laminv \ept^{0}, \qt}  .
}
The difference of \eqref{eq:method1-conti3} and \eqref{eq:monolithic-disc3} with $\lap t$ multiple in consideration of the interpolation $\Pi_h^{\Qb}$ is 
\algn{
\label{eq:tmp3} 
& -\LRa{ s_i \Dpi^0 , \qii} -  \LRa{ \alpha_i \laminv (- \Dpt^0 + \alb \cdot \Dpb^0 ) + \lap t \xi_i \LRp{ \epb^{h,\half} } , \qii } \\ 
\notag &\qquad - \lap t \ahi \LRp{ \epi^{h,\half} , \qii } =  \lap t \LRa{s_i \Ib_{3,i}^0 + \alpha_i \laminv \alb \cdot \Ib_3^0 - \alpha_i \laminv I_4^0, \qii} 
}
in which $\Ib_3^n = (\Ib_{3,1}^n, \ldots, \Ib_{3,N}^n)$, $I_4^n$ are defined by
\algn{ \label{eq:I3-I4}
\Ib_{3,i}^n =  \dpii^{n+\half} - \frac{\Pi_h^{\Qi} \pii^{n+1} - \Pi_h^{\Qi} \pii^{n}}{\lap t} , \qquad 
I_{4}^n =  \dpt^{n+\half} - \frac{\Pi_h^{\Qt} \pt^{n+1} - \Pi_h^{\Qt} \pt^{n}}{\lap t} .
}
By the inf-sup condition \eqref{eq:mpet:infsup}, there exists $\wb^{0} \in \Vbh$ such that 
\algn{ \label{eq:inf-sup}
-\LRa{ \Div \wb^{0}, \Dpt^0} = \| \Dpt^0 \|_{\Qt}^2, \qquad \| \wb^{0} \|_{\Vb} \lesssim \| \Dpt^{0} \|_{\Qt} .
}
With a sufficiently small $\delta >0$ independent of the mesh sizes, we can get 
\algn{ \label{eq:stokes-stability}
&C_0 \| \Dub^{0} \|_{\Vb}^2 + C_0 \| \Dpt^0 \|_{\Qt}^2 \\ %+ \| \Dpt^0 \|_{\laminv}^2 \\
&\notag \le \LRa{ 2\mu \strain(\Dub^{0} ), \strain(\Dub^0 + \delta \wb^0) } - \LRa{\Dpt^{0} , \div (\Dub^0 + \delta \wb^0) } + \LRa{\Div \Dub^{0} , \Dpt^0} %+ \LRa{\laminv \Dpt^{0}, \Dpt^0} 
}
with $C_0 >0$ independent of the mesh sizes.

If we take $\vb = \Dub^0 + \delta \wb^{0}$, $\qt = - \Dpt^0$, $\qii = -\Dpi^0$ in \eqref{eq:tmp1}, \eqref{eq:tmp2}, 
\eqref{eq:tmp3}, and add them altogether, then we get  
\algn{ \label{eq:D0-estm}
&C_0 \| \Dub^0 \|_{\Vb}^2 + C_0 \| \Dpt^0 \|_{\Qt}^2 + \| \Dpt^0 - \alb \cdot \Dpb^0 \|_{\laminv}^2 + \sum_{i=1}^N \| \Dpi^0 \|_{s_i}^2 %+ \| \alb \cdot \Dpb^0 \|_{\laminv}^2 + \| \Dpb^0 \|_{S}^2 
+ \half \lap t \| \epb^{h,1} \|_A^2  \\
\notag &\le - \LRa{ 2 \mu \strain(\eu^{0}), \strain(\Dub^0 + \delta \wb^0) } + \LRa{ \ept^{0} , \div (\Dub^0 + \delta \wb^0) } + \LRa{\laminv \alb \cdot (\epb^{I,1} - \epb^{I,0}), \Dpt^0 } \\
\notag &\quad + \LRa{R_h^0 , \Dpt^0} + \half \lap t \| \epb^{h,0} \|_A^2  - \lap t \LRp{ S(\Ib_3^0 , \Dpb^0) + \LRa{ \laminv I_4^0 , \alb \cdot \Dpb^0 } }  
%&\quad + \LRa{\laminv \ept^{0}, \Dpt^0 } .
}
using \eqref{eq:stokes-stability}. We first remark that 
\algns{
|\LRa{R_h^0 , \Dpt^0}| \lesssim \| e_{\ub}^0 \|_{\Vb} \| \Dpt^0 \|_{\Qt} + \LRp{ \| \ept^0 \|_{\laminv} + \| \alb \cdot \epb^0  \|_{\laminv} } \| \Dpt^0 \|_{\laminv}
}
holds due to the equality $- \div \ub^0 - \laminv \pt^0 = - \laminv \alb \cdot \pb^0$. Moreover, 
\algn{
\notag (\lap t)^2 \| \Ib_{3,i}^n \|_0^2 &= \| \lap t \dpii^{n+\half} - (\pii^{n+1} - \pii^n) + (\epi^{I,n+1} - \epi^{I,n}) \|_0^2 \\
\label{eq:I3-estm} &\lesssim (\lap t)^6 \| \pii \|_{W^{3,\infty}(t_n, t_{n+1}; L^2)}^2 + (\lap t)^2 h^{2k_{p}} \| \pii \|_{W^{1,\infty}(t_n, t_{n+1}; H^{k_{p}+1})}^2, \\
\notag (\lap t)^2 \| I_4^n \|_0^2 &\lesssim \| \lap t \dpt^{n+\half} - (\pt^{n+1} - \pt^n) + (\ept^{I,n+1} - \ept^{I,n}) \|_0^2 \\
\label{eq:I4-estm}&\lesssim (\lap t)^6 \| \pt \|_{W^{3,\infty}(t_n, t_{n+1}; L^2)}^2 + (\lap t)^2 h^{2k_{\ub}} \| \pt \|_{W^{1,\infty}(t_n, t_{n+1}; H^{k_{\ub}})}^2 
}
hold. We also remark that 
\algns{
\| \Dpb^0 \|_S \lesssim  \| \Dub^0 \|_{\Vb}^2 + \| \Dpt^0 \|_{\Qt}^2 + \| \Dpt^0 - \alb \cdot \Dpb^0 \|_{\laminv}^2 + \sum_{i=1}^N \| \Dpi^0 \|_{s_i}^2
}
holds by the triangle inequality $\| \alb \cdot \Dpb^0 \|_{\laminv} \leq \| \Dpt^0 \|_{\laminv} + \| \Dpt^0 - \alb \cdot \Dpb^0 \|_{\laminv}$. 
Then applying Young's inequality to \eqref{eq:D0-estm}, we have 
\algns{
&\| \Dub^0 \|_{\Vb}^2 +  \| \Dpt^0 \|_{\Qt}^2 +  \| \Dpt^0 - \alb \cdot \Dpb^0 \|_{\laminv}^2 + \sum_{i=1}^N \| \Dpi^0 \|_{s_i}^2 + \lap t \| \epb^{h,1} \|_A^2 \\
&\lesssim \| \epb^{I,1} - \epb^{I,0} \|_{\laminv}^2 + (\lap t )^2 \LRp{ \| \Ib_{3}^0 \|_{S}^2 + \| I_4^0 \|_{\laminv}^2 } \\
&\quad + \lap t \| \epb^{h,0} \|_A^2 + \| e_{\ub}^0 \|_{\Vb}^2 + \| \ept^0 \|_{\Qt}^2 + \| \alb \cdot \epb^0 \|_{\laminv}^2 \\
&\lesssim (\lap t)^2 (h^{2k_{\pb}} \| \pb \|_{W^{1,\infty}(0, t_1; H^{k_{\pb} +1})}^2 + h^{2k_{\ub}} \| \pt \|_{W^{1,\infty}(0, t_1; H^{k_{\ub}})}^2) \\
&\quad  + (\lap t)^6 \| \pb, \pt \|_{W^{3,\infty}(0,t_1; L^2)}^2  + \lap t h^{2k_{p}} \| \pb^0 \|_{H^{k_p}}^2 + h^{2 k_{\ub}} ( \| \ub^0 \|_{k_{\ub}+1}^2 + \| \pt^0 \|_{k_{\ub}}^2 ) \\
&\lesssim (\lap t)^6 +  h^{2k_{\ub}} + h^{2k_{p}} ,
}
so \eqref{eq:initD-estm} is proved. % for all terms except $\| \Dpt^0 \|_{\Qt}$. 

To estimate $\| \eu^{h,1} \|_{\Vb}$, $\| \ept^{h,1} \|_{\Qt}$, $\| \epb^{h,1} \|_S$, we need another equation which is the sum of \eqref{eq:n0-eq1} and \eqref{eq:n1-eq1}. If we take $\vb = \eu^{h,0} + \eu^{h,1}$ in this equation, $\qt = - \ept^{h,0} - \ept^{h,1}$ in \eqref{eq:tmp1},   $\qii = - \epi^{h,0} - \epi^{h,1}$ in \eqref{eq:tmp3} for $1\le i \le N$, and add these equations altogether, then we get
\mltlns{
\| \eu^{h,1} \|_{\Vb}^2 - \| \eu^{h,0} \|_{\Vb}^2 + \| \ept^{h,1} - \alb \cdot \epb^{h,1} \|_{\laminv}^2 - \| \ept^{h,0} - \alb \cdot \epb^{h,0} \|_{\laminv}^2  \\
+ \sum_{i=1}^N \LRp{\| \epi^{h,1} \|_{s_i}^2 - \| \epi^{h,0} \|_{s_i}^2 } + 2\lap t \| \epb^{h,\half} \|_A^2  \\
= -\LRa{ 2\mu \strain(\eu^0), \strain(\eu^{h,1} + \eu^{h,0}) } + \LRa{ \ept^0, \div (\eu^{h,1} + \eu^{h,0}) } + \LRa{ R_h^0, \Dpt^0 } \\
 + \LRa{\laminv \alb \cdot \LRp{ \epb^{I,1} + \epb^{I,0}  }, \Dpt^0 } \\
+ 2 \lap t \LRs{ S(\Ib_{3}^0, \epb^{h,1} + \epb^{h,0})  - \LRa{ \laminv I_4^0, \alb \cdot \LRp{\epb^{h,1} + \epb^{h,0} } } }.
}
If 
\mltln{ \label{eq:case1}
 \LRa{\laminv \alb \cdot \LRp{ \epb^{I,1} + \epb^{I,0} }, \Dpt^0 }  + \LRa{ R_h^0, \Dpt^0 } \\
 \ge - \LRa{ 2\mu \strain(\eu^0), \strain(\eu^{h,1} + \eu^{h,0}) } + \LRa{ \ept^0, \div (\eu^{h,1} + \eu^{h,0}) } \\
 + 2 \lap t \LRs{ S(\Ib_{3}^0, \epb^{h,1} + \epb^{h,0})  - \LRa{ \laminv I_4^0, \alb \cdot \LRp{\epb^{h,1} + \epb^{h,0} } } }
%  - \LRa{ \laminv I_4^l, \alb \cdot \LRp{\epb^{h,1} + \epb^{h,0} } }
}
is true, then 
\algns{
&\| \eu^{h,1} \|_{\Vb}^2 + \| \ept^{h,1} - \alb \cdot \epb^{h,1} \|_{\laminv}^2 + \sum_{i=1}^N \| \epi^{h,1} \|_{s_i}^2 + 2\lap t \| \epb^{h,\half} \|_A^2  \\
& \le \| \eu^{h,0} \|_{\Vb}^2 + \| \ept^{h,0} - \alb \cdot \epb^{h,0} \|_{\laminv}^2 + \sum_{i=1}^N \| \epi^{h,0} \|_{s_i}^2 +  \LRa{2 \laminv \alb \cdot \LRp{ \epb^{I,1} + \epb^{I,0} } + R_h^0 , \Dpt^{0} } , 
}
so \eqref{eq:init-estm} for $\| \eu^{h,1} \|_{\Vb}$ follows from \eqref{eq:init-h-approx}, the Cauchy--Schwarz inquality, and the estimate of $\| \Dpt^0 \|_{\laminv}$.
If \eqref{eq:case1} is not true, then we have 
\algns{
&\| \eu^{h,1} \|_{\Vb}^2 + \| \ept^{h,1} - \alb \cdot \epb^{h,1} \|_{\laminv}^2 + \sum_{i=1}^N \| \epi^{h,1} \|_{s_i}^2 \\
& - \LRp{ \| \eu^{h,0} \|_{\Vb}^2 + \| \ept^{h,0} - \alb \cdot \epb^{h,0} \|_{\laminv}^2 + \sum_{i=1}^N \| \epi^{h,0} \|_{s_i}^2 } + 2\lap t \| \epb^{h,\half} \|_A^2  \\
&\quad \le -2 \LRa{ 2\mu \strain(\eu^0), \strain(\eu^{h,1} + \eu^{h,0}) } + 2 \LRa{ \ept^0, \div (\eu^{h,1} + \eu^{h,0}) } \\
%&\qquad + 2\LRa{\laminv \alb \cdot \LRp{ \epb^{h,1} + \epb^{h,0} }, 2\Dpt^0 } \\
&\qquad + 4 \lap t \LRs{ S(\Ib_{3}^0, \epb^{h,1} + \epb^{h,0}) - \LRa{\laminv \alb \cdot \LRp{ \epb^{h,1} + \epb^{h,0} }, I_4^0 } }.
}
If we use the Cauchy--Schwarz inquality and divide both sides by 
\mltlns{
\LRp{ \| \eu^{h,1} \|_{\Vb}^2 + \| \ept^{h,1} - \alb \cdot \epb^{h,1} \|_{\laminv}^2 + \sum_{i=1}^N \| \epi^{h,1} \|_{s_i}^2  }^\half \\
+ \LRp{  \| \eu^{h,0} \|_{\Vb}^2 + \| \ept^{h,0} - \alb \cdot \epb^{h,0} \|_{\laminv}^2 + \sum_{i=1}^N \| \epi^{h,0} \|_{s_i}^2 }^\half ,
}
we can obtain 
\mltlns{
\LRp{ \| \eu^{h,1} \|_{\Vb}^2 + \| \ept^{h,1} - \alb \cdot \epb^{h,1} \|_{\laminv}^2 + \sum_{i=1}^N \| \epi^{h,1} \|_{s_i}^2  }^\half \\
\lesssim \LRp{  \| \eu^{h,0} \|_{\Vb}^2 + \| \ept^{h,0} - \alb \cdot \epb^{h,0} \|_{\laminv}^2 + \sum_{i=1}^N \| \epi^{h,0} \|_{s_i}^2 }^\half \\
+ \| \eu^0 \|_{\Vb} + \| \ept^0 \|_{\Qt} + \lap t (\| \Ib_3^0 \|_{S} + \| I_4^0 \|_0 ) .
}
Then \eqref{eq:init-estm} for $\| \eu^{h,1} \|_{\Vb}$ is proved. The estimate \eqref{eq:init-estm} for $\| \ept^{h,1} \|_{\Qt}$ can be obtained from \eqref{eq:n1-eq1} by taking $\vb \in \Vbh$ such that $\LRa{ \ept^{h,1}, \div \vb} = \| \ept^{h,1} \|_{\Qt}^2$ and $\| \vb \|_{\Vb} \lesssim \| \ept^{h,1} \|_{\Qt}$. Finally, the same estimate for $\| \epb^{h,1} \|_S$ follows from the same argument using the triangle inequality as before. 
\end{proof}

%To estimate $\| \alb \cdot \Dpb^{1} \|_{\laminv}^2$ and $\lap t \mc{A}(\epb^{h,2}, \epb^{h,2})$, we use a similar argument to 
%relevant equations, i.e., \eqref{eq:method2-err1}, \eqref{eq:method2-err3} with $n=1$ and \eqref{eq:Dpb-eq1}.
%Taking $\vb = \Dub^{1} + \delta \wb^1$, $\qt = -\Dpt^{1}$, $\qii = \Dpi^{1}$ for $1 \le i \le N$ in these equations, we get 
%\algns{
%&\| \Dub^1 \|_{\Vb}^2 + C \| \Dpt^1 \|_{\Qt}^2 + \| \Dpt^1 \|_{\laminv}^2 + \sum_{i=1}^N \| \Dpi^1 \|_{s_i}^2 + \| \alb \cdot \Dpb^1 \|_{\laminv}^2 + \half \lap t \mc{A}(\epb^{h,2}, \epb^{h,2} ) \\
%&\le \half \lap t \mc{A} (\epb^{h,1}, \epb^{h,1} ) + \LRa{\laminv \cdot (\Ib_1^1 - \Ib_1^0 + \epb^{I,2} - \epb^{I,1} + \Dpt^0 ), \Dpt^1 } \\
%&\quad + \lap t \LRp{ \sum_{i=1}^N \LRa{ s_i \Ib_{3,i}^1 , \Dpi^1} + \LRa{ \laminv (\alb \cdot \Ib_3^1 + I_4^1 ), \alb \cdot \Dpb^1 } } .
%}
%Therefore 
%\algns{
%&\| \Dub^1 \|_{\Vb}^2 + \| \Dpt^1 \|_{\laminv}^2 + \sum_{i=1}^N \| \Dpi^1 \|_{s_i}^2 + \| \alb \cdot \Dpb^1 \|_{\laminv}^2 + \lap t \mc{A}(\epb^{h,2}, \epb^{h,2} ) \\
%&\lesssim \| \epb^{I,2} - \epb^{I,1} \|_{\laminv}^2 + (\lap t )^2 \LRp{ \| \Ib_{3,i}^1 \|_{s_i}^2 + \| \alb \cdot \Ib_3^1 + I_4^1 \|_{\laminv}^2 } \\
%&\quad + \| \Dpt^0 \|_{\laminv}^2 + \lap t \mc{A} (\epb^{h,1}, \epb^{h,1} ) .
%}

\subsection{The first partitioned method (elasticity-then-diffusion)} \label{subsec:etd}

We here present the first partitioned method inspired by the differential form of \eqref{eq:mpet:tp2}, i.e., 
\algns{
- \div \dub - \laminv \dpt + \laminv \alb \cdot \dpb = 0.
}

{\bf Method 1} 
\medskip 

Suppose that $(\ubh^0, \pth^0, \pbh^0)$ and $(\ubh^1, \pth^1, \pbh^1)$ are provided by the monolithic numerical method described in the subsection \ref{subsec:prelim}.
\begin{itemize}
 \item[{\bf Step 1}] For $n \ge 1$, given $\ubh^n$, $\pth^n$, $\pbh^{n}$, $\pbh^{n-1}$, compute $(\ubh^{n+1}, \pth^{n+1})$ by
\algn{
\label{eq:method1-disc1} &\LRa{ 2\mu \strain(\ubh^{n+1}), \strain(\vb) } - \LRa{\pth^{n+1}, \div \vb} = \LRa{\fb^{n+1}, \vb} \quad \forall \vb \in \Vbh , \\
\label{eq:method1-disc2} &- \LRa{\Div (\ubh^{n+1} - \ubh^n), \qt} - \LRa{\laminv (\pth^{n+1} - \pth^n ), \qt} \\
\notag &\quad = - \LRa{\laminv \alb \cdot (\pbh^{n} - \pbh^{n-1}) , \qt} \quad  \forall \qt \in Q_{t,h} .
}
 \item[{\bf Step 2}] Compute $\pbh^{n+1}$ with $\pth^{n}$ and $\pth^{n+1}$ by
\algn{ \label{eq:method1-disc3}
&- \LRa{ s_i \frac{\piih^{n+1} - \piih^n }{\lap t} , \qii} \\
&\notag - \LRa{\alpha_i \laminv \alb \cdot \frac{\pbh^{n+1} - \pbh^n}{\lap t} + \xi_i \LRp{ \pbh^{n+\half} } , \qii } - \ahi \LRp{ \piih^{n+\half}  , \qii } \\
&\notag = - \LRa{ g^{n+\half} ,\qii} - \LRa{\alpha_i \laminv \frac{\pth^{n+1} - \pth^{n}}{\lap t}, \qii}
\quad \foralls \qii \in Q_{i,h} ,  1 \le i \le N.
}

 \item[{\bf Step 3}] Repeat {\bf Step 1} for $n \leftarrow n+1$
\end{itemize}

\bigskip 
%The first system \eqref{eq:method1-disc1}--\eqref{eq:method1-disc2} is the Lam\'e equation which is well-posed. 

\begin{theorem} \label{thm:method1} Let $(\ub, \pt, \pb)$ be the exact solution of \eqref{eq:mpet:vf} for compatible initial data $(\ub^0, \pt^0, \pb^0)$. For numerical initial data $(\ubh^0, \pth^0, \pbh^0)$ satisfying \eqref{eq:initial-assumption1}, suppose that $\{ (\ubh^n, \pth^n, \pbh^n) \}$ is a numerical solution obtained by {\bf Method 1}. Assuming that the exact solution is sufficiently regular, 
\algns{
\| \ub^n - \ubh^n \|_{\Vb} + \| \pt^n - \pth^n \|_{\Qt} + \| \pb^n - \pbh^n \|_S &\lesssim (\lap t)^2 + h^{k_{\ub}} + h^{k_p}, \\
\| \pb^n - \pbh^n \|_{A} &\lesssim (\lap t)^{\frac{3}{2}} + (\lap t)^{-\half} (h^{k_{\ub}} + h^{k_p} ) 
}
hold with implicit constants depending on the norms of the exact solution.
\end{theorem}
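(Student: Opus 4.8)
The plan is to follow the template of Theorem~\ref{thm:init-estm}: split each error through the interpolation operators as in \eqref{eq:err-decomp}, control the interpolation parts $e_\sigma^{I,n}$ by the approximation estimates \eqref{eq:intp-estm1}--\eqref{eq:intp-estm2}, and reduce everything to a discrete energy estimate for the projected errors $e_{\ub}^{h,n}$, $e_{\pt}^{h,n}$, $e_{\pb}^{h,n}$. First I would subtract {\bf Method 1} from the continuous relations \eqref{eq:method1-conti}. Since $\Pi_h^{\Vb}$, $\Pi_h^{\Qt}$ reproduce the Lam\'e pair exactly, \eqref{eq:method1-disc1} yields the clean instantaneous identity $\LRa{2\mu\strain(e_{\ub}^{h,n+1}),\strain(\vb)}=\LRa{e_{\pt}^{h,n+1},\div\vb}$ for all $\vb\in\Vbh$ and every $n$. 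Differencing the static mass balance \eqref{eq:method1-conti2} between levels $n+1$ and $n$ and subtracting \eqref{eq:method1-disc2}, the interpolation terms cancel by the definition of the projection and I obtain $-\LRa{\div\Dub^n,\qt}-\LRa{\laminv\Dpt^n,\qt}=-\LRa{\laminv\alb\cdot(\Dpb^{n-1}+J^n),\qt}$, where the crucial feature is the one-step lag: writing $\Dpb^{n-1}=\Dpb^n-(\Dpb^n-\Dpb^{n-1})$ exposes a discrete second-difference splitting error, while $J^n:=(\pb^{n+1}-2\pb^n+\pb^{n-1})+(e_{\pb}^{I,n}-e_{\pb}^{I,n-1})$ collects an $O((\lap t)^2)$ consistency term and an interpolation-in-time term. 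Finally, subtracting \eqref{eq:method1-disc3} from \eqref{eq:method1-conti3}, summing over $i$ and multiplying by $\lap t$ produces the diffusion error equation $-S(\Dpb^n,\qb)+\LRa{\laminv\Dpt^n,\alb\cdot\qb}-\lap t\,A(e_{\pb}^{h,n+\half},\qb)=\lap t\,S(\Ib_3^n,\qb)-\lap t\LRa{\laminv I_4^n,\alb\cdot\qb}$, with the Crank--Nicolson consistency terms $\Ib_3^n$, $I_4^n$ of \eqref{eq:I3-I4} already estimated to be $O((\lap t)^2+h^{k})$ in \eqref{eq:I3-estm}--\eqref{eq:I4-estm}.

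Next I would build the energy. Adding the two instantaneous Lam\'e identities at levels $n+1$ and $n$ tested with $\vb=\Dub^n$, taking $\qt=-(e_{\pt}^{h,n+1}+e_{\pt}^{h,n})$ in the differenced mass balance, and $\qb=-2e_{\pb}^{h,n+\half}$ in the diffusion equation, the $\div$-couplings cancel (the Lam\'e identity gives $\|e_{\ub}^{h,n+1}\|_{\Vb}^2-\|e_{\ub}^{h,n}\|_{\Vb}^2=\LRa{e_{\pt}^{h,n+1}+e_{\pt}^{h,n},\div\Dub^n}$), and the remaining pressure couplings assemble into a telescoping identity of the form $(\|e_{\ub}^{h,n+1}\|_{\Vb}^2-\|e_{\ub}^{h,n}\|_{\Vb}^2)+(\|e_{\pt}^{h,n+1}-\alb\cdot e_{\pb}^{h,n+1}\|_{\laminv}^2-\|e_{\pt}^{h,n}-\alb\cdot e_{\pb}^{h,n}\|_{\laminv}^2)+\sum_i(\|e_{p_i}^{h,n+1}\|_{s_i}^2-\|e_{p_i}^{h,n}\|_{s_i}^2)+2\lap t\|e_{\pb}^{h,n+\half}\|_A^2=(\text{consistency})+(\text{splitting})$. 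Summing over $n$ from $1$ to $m-1$ telescopes the energy, leaves the accumulated dissipation $2\lap t\sum_n\|e_{\pb}^{h,n+\half}\|_A^2$ on the left, and starts from the initial energy at $n=1$ that is controlled by \eqref{eq:init-estm} of Theorem~\ref{thm:init-estm}.

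The main obstacle is the splitting contribution $\sum_n\LRa{\laminv\alb\cdot(\Dpb^n-\Dpb^{n-1}),e_{\pt}^{h,n+1}+e_{\pt}^{h,n}}$ coming from the lag, since it involves a discrete second difference of the \emph{unknown} error rather than a manifestly small quantity. I would treat it by summation by parts in $n$, which converts it into a boundary contribution at the endpoints (bounded by the energy, hence absorbable) plus a sum of products of first differences of $\pb$-errors against first differences of $\pt$-errors; these are then measured in the $\laminv$-norm, dominated by $\|\cdot\|_S$, and absorbed into the accumulated $A$-dissipation through the discrete Poincar\'e inequality $\|\cdot\|_S\lesssim\|\cdot\|_A$, which is exactly the inequality guaranteed to be robust for small $s_i$ and large $\lambda$. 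The residual cross terms feed a discrete Gronwall inequality; because the splitting term is ultimately higher order and fully absorbable, no step-size restriction is required and unconditional stability follows. Collecting \eqref{eq:I3-estm}--\eqref{eq:I4-estm}, the interpolation-in-time bound $\|e_{\pb}^{I,n}-e_{\pb}^{I,n-1}\|_{\laminv}\lesssim\lap t\,h^{k_p}$, and the $O((\lap t)^2)$ second difference of $\pb$, the right-hand side is of order $(\lap t)^4+h^{2k_{\ub}}+h^{2k_p}$.

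The energy estimate then gives, pointwise in $n$, $\|e_{\ub}^{h,n}\|_{\Vb}^2+\|e_{\pt}^{h,n}-\alb\cdot e_{\pb}^{h,n}\|_{\laminv}^2+\sum_i\|e_{p_i}^{h,n}\|_{s_i}^2\lesssim(\lap t)^4+h^{2k_{\ub}}+h^{2k_p}$ together with the accumulated bound $\lap t\sum_n\|e_{\pb}^{h,n+\half}\|_A^2\lesssim(\lap t)^4+h^{2k_{\ub}}+h^{2k_p}$. As in the initialization, I would upgrade the control of $e_{\pt}^{h,n}$ from the $\laminv$-norm to the full $\Qt$-norm using the inf-sup condition \eqref{eq:mpet:infsup}, testing the Lam\'e identity with a $\wb\in\Vbh$ realizing $\|e_{\pt}^{h,n}\|_{\Qt}$, and recover $\|e_{\pb}^{h,n}\|_S$ from $\|e_{\pt}^{h,n}-\alb\cdot e_{\pb}^{h,n}\|_{\laminv}$, $\|e_{\pt}^{h,n}\|_{\laminv}$, and $\sum_i\|e_{p_i}^{h,n}\|_{s_i}$ via the triangle inequality; taking square roots and adding the interpolation errors \eqref{eq:intp-estm1}--\eqref{eq:intp-estm2} gives the stated $(\lap t)^2+h^{k_{\ub}}+h^{k_p}$ bound. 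Finally the $A$-norm estimate is obtained by dropping the sum: bounding a single term of the nonnegative accumulated dissipation by the whole sum yields $\|e_{\pb}^{h,m+\half}\|_A^2\lesssim(\lap t)^{-1}((\lap t)^4+h^{2k_{\ub}}+h^{2k_p})$, whence $\|e_{\pb}^{h,m}\|_A\lesssim(\lap t)^{3/2}+(\lap t)^{-\half}(h^{k_{\ub}}+h^{k_p})$, which is precisely the loss of a half power of $\lap t$ reflected in the second estimate of the theorem.
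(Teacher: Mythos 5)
Your overall architecture (error decomposition through the projections, the telescoping energy identity obtained from $\vb=\Dub^l$, $\qt=-(\ept^{h,l+1}+\ept^{h,l})$, $\qii=-2e_{p_i}^{h,l+\half}$, and summation by parts on the lag term) reproduces the second half of the paper's argument, but there is a genuine gap: you are missing the paper's entire first step, a separate $\ell^2$-in-time estimate for the \emph{increments}, namely $\sum_{l}\bigl(\|\Dub^l\|_{\Vb}^2+\|\Dpt^l\|_{\Qt}^2+\|\Dpb^l\|_S^2\bigr)+\lap t\|\epb^{h,n}\|_A^2\lesssim(\lap t)^4+(\lap t)^2h^{2k_{\ub}}+\lap t h^{2k_p}$, obtained by testing the differenced error equations with $\Dub^l+\delta\wb^l$, $-\Dpt^l$, $-\Dpi^l$ and carefully tuning Young's inequality so that the lagged term $\|\alb\cdot\Dpb^{l-1}\|_{\laminv}^2$ is absorbed by the $\|\Dpb^{l-1}\|_S^2$ retained at the previous step. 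Without this, your treatment of the splitting term fails: after summation by parts the interior sum is $\sum_l\LRa{\laminv\alb\cdot\Dpb^l,\Dpt^{l+1}}$ and the boundary terms involve $\Dpb^{0}$ and $\Dpb^{n-1}$. These are first differences of the unknown discrete errors, and they are \emph{not} controlled by the accumulated dissipation $\lap t\sum_l\|\epb^{h,l+\half}\|_A^2$, which only sees the averages $\tfrac12(\epb^{h,l+1}+\epb^{h,l})$, not the differences; the discrete Poincar\'e inequality $\|\cdot\|_S\lesssim\|\cdot\|_A$ does not bridge this. If instead you estimate $\|\Dpb^l\|_S$ and $\|\Dpt^{l+1}\|_{\laminv}$ crudely by the energies $X_l$, the resulting sum $\sum_l X_l^2$ carries no factor of $\lap t$, so your proposed discrete Gronwall argument produces a constant of size $e^{CT/\lap t}$ and the claimed unconditional, $\lap t$-independent bound is lost. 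The increment estimate is precisely what makes these terms $O((\lap t)^2+h^{k_{\ub}}+h^{k_p})$ in $\ell^2$ so that Cauchy--Schwarz (combined with the paper's quadratic-inequality/maximal-index device, which replaces Gronwall) closes the argument.

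A secondary consequence of the same omission concerns the $\|\cdot\|_A$ estimate. You extract it by isolating one term of the dissipation $\lap t\sum_l\|\epb^{h,l+\half}\|_A^2$, but that bounds only the half-integer average $\|\epb^{h,m+\half}\|_A$, from which $\|\epb^{h,m}\|_A$ does not follow. In the paper the $A$-norm bound at integer time levels comes out of the first step directly: testing with $\qii=-\Dpi^l$ turns $\lap t\,\ahi(e_{p_i}^{h,l+\half},\Dpi^l)$ into the telescoping quantity $\tfrac{\lap t}{2}\bigl(\|e_{p_i}^{h,l+1}\|_{\ahi}^2-\|e_{p_i}^{h,l}\|_{\ahi}^2\bigr)$, leaving $\tfrac12\lap t\|\epb^{h,n}\|_A^2$ on the left after summation. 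So the missing increment estimate is needed twice: once to control the splitting term, and once to produce the stated $A$-norm bound at time $t_n$.
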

\begin{proof}
By the triangle inequality and the optimal approximation properties \eqref{eq:intp-estm1} and \eqref{eq:intp-estm2}, it suffices to estimate $\| \eu^{h,n} \|_{\Vb}$, $\| \ept^{h,n} \|_{\Qt}$, $\| \epb^{h,n} \|_S$, and $\| \epb^{h,n} \|_A$. To estimate these terms, 
we consider the differences of continuous equations and discrete equations. The three continuous equations are 
\eqref{eq:method1-conti1} with $n = l$, the difference of \eqref{eq:method1-conti2} with $n=l+1$ and $n=l$, and \eqref{eq:method1-conti3} with $n=l$. 
If we subtract the discrete equations \eqref{eq:method1-disc1}, \eqref{eq:method1-disc2}, \eqref{eq:method1-disc3} with $n=l$ from these equations, assuming that $\pb$ is sufficiently regular in case for the DG or EG methods, the differences of the continuous equations and the discrete equations are 
\algns{
& \LRa{ 2\mu \strain(\eu^{l+1}), \strain(\vb) } - \LRa{\ept^{l+1}, \div \vb} = 0  \quad \forall \vb \in \Vbh , \\
& - \LRa{\Div \LRp{\eu^{l+1} - \eu^l } , \qt} - \LRa{\laminv \LRp{\ept^{l+1} - \ept^l} , \qt} \\
&\quad = - \LRa{\laminv \alb \cdot \LRp{ \pb^{l+1} - \pb^l - (\pbh^{l} - \pbh^{l-1}) }, \qt} \quad \forall \qt \in Q_{t,h} , \\
& -\LRa{ s_i \LRp{ \dpii^{l+\half}  -  \frac{\piih^{l+1} - \piih^l }{\lap t} } , \qii} \\
& \quad - \LRa{\alpha_i \laminv \alb \cdot \LRp{ \dpb^{l+\half} - \frac{\pbh^{l+1} - \pbh^l}{\lap t} }
+ \xi_i \LRp{ \epb^{l+\half} } , \qii } - \ahi \LRp{\epi^{l+\half} , \qii }  \\
&\quad = - \LRa{\alpha_i \laminv \LRp{ \dpt^{l+\half} - \frac{\pth^{l+1} - \pth^{l}}{\lap t} }, \qii}
\qquad \foralls \qii \in Q_{i,h} ,  1 \le i \le N
}
for $l \ge 1$. These equations with the interpolations $\Pi_h^{\Vb}$, $\Pi_h^{\Qt}$, $\Pi_h^{\Qb}$, lead to the error equations
\begin{subequations}
\label{eq:method1-err-a}
\algn{
\label{eq:method1-err1a} & \LRa{ 2\mu \strain(\eu^{h,l+1}), \strain(\vb) } - \LRa{\ept^{h,l+1}, \div \vb} = 0 , \\
\label{eq:method1-err2a} 
&-\LRa{\Div \Dub^{l} , \qt} - \LRa{\laminv \Dpt^{l}, \qt} \\
&\notag \qquad = - \LRa{\laminv \alb \cdot (\Ib_1^l - \Ib_1^{l-1} + \Ib_2^l - \Ib_2^{l-1} + \Dpb^{l-1} ), \qt} , \\
\label{eq:method1-err3a} & -\LRa{ s_i \Dpi^l , \qii} -  \LRa{ \alpha_i \laminv (\alb \cdot \Dpb^l )
+ \lap t \xi_i \LRp{ \epb^{h,l+\half} } , \qii } - \lap t \ahi \LRp{ \epi^{h,l+\half} , \qii } \\
&\notag \qquad = \lap t \LRa{s_i \Ib_{3,i}^l + \alpha_i \laminv \alb \cdot \Ib_3^l - \alpha_i \laminv I_4^l, \qii} - \LRa{ \alpha_i \laminv \Dpt^l, \qii }
}
\end{subequations}
for $l \ge 1$, $i=1,\ldots, N$ where $\Ib_{j}^l = (I_{j,1}^l, \ldots, I_{j,N}^l)$ are defined as 
\algns{
\Ib_{1,i}^l &= \pii^{l+1} - \pii^l , & \Ib_{2,i}^l &= \pii^{l}  - \Pi_h^{\Qi} \pii^{l} 
}
and in \eqref{eq:I3-I4}. Considering the differences of two consecutive time steps of \eqref{eq:method1-err1a}, we get 
%\subeqns{eq:method1-err}{
\algn{
\label{eq:method1-err1} 
&\LRa{ 2\mu \strain(\Dub^{l} ), \strain(\vb) } - \LRa{\Dpt^{l} , \div \vb} = 0 
}
%\label{eq:method1-err2} 
%&-\LRa{\Div \Dub^{l} , \qt} - \LRa{\laminv \Dpt^{l}, \qt} \\
%&\notag \qquad = - \LRa{\laminv \alb \cdot (\Ib_1^l - \Ib_1^{l-1} + \Ib_2^l - \Ib_2^{l-1} + \Dpb^{l-1} ), \qt} \\
%\label{eq:method1-err3} 
%& \LRa{ s_i \Dpi^{l} , \qii} + \LRa{\alpha_i \laminv (-\Dpt^l + \alb \cdot \Dpb^l ) + \lap t \xi_i \LRp{ \epb^{h,l+\half} } , \qii } + \lap t \ahi \LRp{\epi^{h,l+\half} , \qii } \\
%&\notag \qquad = \lap t\LRp{ \LRa{ s_i \Ib_{3,i}^l, \qii} + \LRa{ \alpha_i \laminv (\alb \cdot \Ib_3^l - I_4^l), \qii} } , \quad  1 \le i \le N
%}
for $l \ge 1$.

We are ready to prove the error estimates. Since the proof is long, we split the proof into two steps. 
In the first step, we prove 

\mltln{ \label{eq:l2-D-estm}
\sum_{l=1}^{n-1} \LRp{ \| \Dub^{l} \|_{\Vb}^2 + \| \Dpt^{l} \|_{\Qt}^2 + \| \Dpb^{l} \|_{S}^2 } + \lap t \| \epb^{h,n} \|_A^2 \\
\lesssim (\lap t)^4 + (\lap t)^2 h^{2k_{\ub}} + \lap t h^{2k_{\pb}} .
}
In the second step, we prove 
\algn{ \label{eq:l2-estm}
\| \eu^{h,n} \|_{\Vb} + \| \ept^{h,n} \|_{\Qt} + \| \epb^{h,n} \|_{S} \lesssim  (\lap t)^2 + h^{k_p} + h^{k_{\ub}} .
}
It is easy to see that these two estimates complete the proof.

%\noindent {\bf Estimate of $ \sum_{l=1}^n ( \| \Dub^l \|_{\Vb}^2 + \| \Dpt^l \|_{\Qt}^2 + \| \Dpb^l \|_S^2 )+ \lap t \| \epb^{h,n} \|_{A}^2$} 
%Here we begin $ \sum_{l=1}^n ( \| \Dub^l \|_{\Vb}^2 + \| \Dpt^l \|_{\Qt}^2 + \| \Dpb^l \|_S^2 )+ \lap t \| \epb^{h,n} \|_{A}^2$. 
For the first step let $\wb^l$ be an element in $\Vbh$ satisfying the condition \eqref{eq:inf-sup} for $\Dpt^l$. 
If we take $\vb = \Dub^l + \delta \wb^{l}$ in \eqref{eq:method1-err1}, $\qt = - \Dpt^l$ with $\delta >0$ as above in \eqref{eq:method1-err2a}, $\qii = -\Dpi^l$ in \eqref{eq:method1-err3a}, and add them altogether, then the sum gives 
\algns{
%&\| \Dub^{l} \|_{\Vb}^2 + \delta \LRa{2 \mu \e(\Dub^l), \strain(\wb^l) } - \delta \LRa{ \Dpt^l, \div \wb^l } + \| \Dpt^l \|_{\laminv}^2 + \| \Dpb^l \|_{S}^2  + \half \lap t \| \epb^{h,l+1} \|_A^2 \\
& \LRa{ 2 \mu \strain(\Dub^{l}), \strain(\Dub^l + \delta \wb^l) } - \LRa{\Dpt^l, \div (\Dub^l + \delta \wb^l) } + \LRa{ \div \Dub^l, \Dpt^l } \\
&\quad + \| \Dpt^l \|_{\laminv}^2 + \| \Dpb^l \|_{S}^2  + \half \lap t \| \epb^{h,l+1} \|_A^2 \\
&\quad \le   \LRa{\laminv \alb \cdot (\Ib_1^l - \Ib_1^{l-1} + \Ib_2^l - \Ib_2^{l-1} + \Dpb^l + \Dpb^{l-1} ), \Dpt^l} \\
&\qquad + \half \lap t \| \epb^{h,l} \|_A^2 - \lap t \LRp{ S (\Ib_{3}^l , \Dpb^l) + \LRa{ \laminv I_4^l, \alb \cdot \Dpb^l}  } 
%&\qquad - \delta_{n1} \LRp{  \LRa{2\mu \e(\eu^{I,0}), \strain(\Dub^1 + \delta \wb^{1})} - \LRa{\ept^{I,0}, \div (\Dub^1 + \delta \wb^{1})} + \LRa{\div \eu^{I,0}, \Dpt^1} + \LRa{\laminv \ept^{I,0}, \Dpt^1} }
}
for $l \ge 1$. By Young's inequality, 
\algns{
&\LRa{\laminv \alb \cdot (\Ib_1^l - \Ib_1^{l-1} + \Ib_2^l - \Ib_2^{l-1} + \Dpb^l + \Dpb^{l-1} ), \Dpt^l} \\
&\qquad - \lap t \LRp{ S (\Ib_{3}^l , \Dpb^l) + \LRa{ \laminv I_4^l, \alb \cdot \Dpb^l}  } \\
&\le \frac 1{2\e_1} \| \alb \cdot \LRp{\Ib_1^{l} - \Ib_1^{l-1} + \Ib_2^{l} - \Ib_2^{l-1} } \|_{\laminv}^2 \\
&\quad + \frac 12 \| \alb \cdot \Dpb^{l} \|_{\laminv}^2+ \frac 1{2 + \e_2} \| \alb \cdot \Dpb^{l-1} \|_{\laminv}^2 + \LRp{\half  + \frac{\e_1}2 + \frac{2+\e_2}4 } \| \Dpt^{l} \|_{\laminv}^2 \\
%&\quad  + \half \lap t \LRp{ \mc{A} (\epb^{h,n+1}, \epb^{h,n+1}) + \mc{A} (\epb^{h,n}, \epb^{h,n}) } \\
&\quad + \frac{(\lap t)^2}{2\e_3} \LRp{ \| \Ib_{3}^l \|_{S}^2 + \| I_4^l \|_{\laminv}^2 } + \e_3 \| \Dpb^l \|_{S}^2  
}
with any $\e_1, \e_2, \e_3 >0$. 
From \eqref{eq:stokes-stability} and the above two inequalities we can obtain 
\algns{
& C_0 \| \Dub^{l} \|_{\Vb}^2 + {C_0} \| \Dpt^l \|_{\Qt}^2 +  \| \Dpb^l \|_{S}^2  + \half \lap t \| \epb^{h,l+1} \|_A^2 \\
&\quad \le  \frac 1{2\e_1} \| \alb \cdot \LRp{\Ib_1^{l} - \Ib_1^{l-1} + \Ib_2^{l} - \Ib_2^{l-1} } \|_{\laminv}^2 +  \frac 1{2 + \e_2} \| \alb \cdot \Dpb^{l-1} \|_{\laminv}^2  \\
&\qquad + \LRp{\frac{\e_1}2 + \frac{\e_2}4 } \| \Dpt^{l} \|_{\laminv}^2 + \frac{(\lap t)^2}{2\e_3} \LRp{ \| \Ib_{3}^l \|_{S}^2 + \| I_4^l \|_{\laminv}^2 } + \e_3 \| \Dpb^l \|_{S}^2 .
%&\qquad - \delta_{n1} \LRp{  \LRa{2\mu \e(\eu^{I,0}), \e(\Dub^1 + \delta \wb^{1})} - \LRa{\ept^{I,0}, \div (\Dub^1 + \delta \wb^{1})} + \LRa{\div \eu^{I,0}, \Dpt^1} + \LRa{\laminv \ept^{I,0}, \Dpt^1} }
}
We choose sufficiently small $\e_1$ and $\e_2$ such that $\LRp{\frac{\e_1}2 + \frac{\e_2}4 } \| \qt \|_{\laminv}^2 \le \frac{C_0}2 \| \qt \|_{\Qt}^2$ for all $\qt \in \Qth$ and for the $C_0$ in \eqref{eq:stokes-stability}. 
Then the above inequality gives 
\algns{
& C_0 \| \Dub^{l} \|_{\Vb}^2 + \frac{C_0}2 \| \Dpt^l \|_{\Qt}^2 + \LRp{1 - \e_3} \| \Dpb^l \|_{S}^2  + \half \lap t \| \epb^{h,l+1} \|_A^2 \\
&\quad \le  \frac 1{2\e_1} \| \alb \cdot \LRp{\Ib_1^{l} - \Ib_1^{l-1} + \Ib_2^{l} - \Ib_2^{l-1} } \|_{\laminv}^2 +  \frac 1{2 + \e_2} \| \alb \cdot \Dpb^{l-1} \|_{\laminv}^2  \\
&\qquad  + \frac{(\lap t)^2}{2\e_3} \LRp{ \| \Ib_{3}^l \|_{S}^2 + \| I_4^l \|_{\laminv}^2 }  .
%&\qquad - \delta_{n1} \LRp{  \LRa{2\mu \e(\eu^{I,0}), \e(\Dub^1 + \delta \wb^{1})} - \LRa{\ept^{I,0}, \div (\Dub^1 + \delta \wb^{1})} + \LRa{\div \eu^{I,0}, \Dpt^1} + \LRa{\laminv \ept^{I,0}, \Dpt^1} }
}
We take $\e_3$ to satisfy $ 1 - \e_3  - \frac{1}{2+\e_2} = \e_3$. %$1 - 2\e_3 = \half + \frac 1{1+\e_2}$. 
Taking the summation of the above inequality over the index $l$ from 1 to $n-1$ and ignoring some nonnegative terms, we can get 
\algns{
&\sum_{l=1}^{n-1} \LRp{ C_0 \| \Dub^{l} \|_{\Vb}^2 + \frac{C_0}2 \| \Dpt^{l} \|_{\Qt}^2 + \e_3 \| \Dpb^{l} \|_{S}^2 } + \half \lap t \| \epb^{h,n} \|_A^2 \\
&\quad \le  \sum_{l=1}^{n-1} \frac 1{2\e_1} \| \alb \cdot \LRp{\Ib_1^{l} - \Ib_1^{l-1} + \Ib_2^{l} - \Ib_2^{l-1} } \|_{\laminv}^2 + \frac 1{2+\e_2} \| \alb \cdot \Dpb^{0} \|_{\laminv}^2 \\
&\qquad  + \half \lap t \| \epb^{h,1} \|_A^2  + \frac{(\lap t)^2}{2\e_3} \sum_{l=1}^{n-1} \LRp{ \| \Ib_{3}^{l} \|_{S}^2 + \| I_4^{l} \|_{\laminv}^2 } . %\\
%&\qquad - \LRp{  \LRa{2\mu \e(\eu^{I,0}), \e(\Dub^1 + \delta \wb^{1})} - \LRa{\ept^{I,0}, \div (\Dub^1 + \delta \wb^{1})} + \LRa{\div \eu^{I,0}, \Dpt^1} + \LRa{\laminv \ept^{I,0}, \Dpt^1} } .
}
Since $\e_1$, $\e_2$, $\e_3$ depend only on $C_0$, they are independent of the mesh sizes. A standard argument gives 
\algn{
\label{eq:I1-diff-estm} \| \alb \cdot (\Ib_1^l - \Ib_1^{l-1}) \|_{\laminv} &\lesssim (\lap t)^2 \| \alb \cdot \pb \|_{W^{2,\infty}(t_{l-1}, t_{l+1}; L^2)}, \\
\label{eq:I2-diff-estm} \| \alb \cdot (\Ib_2^l - \Ib_2^{l-1}) \|_{\laminv} &\lesssim h^{k_p} \| \alb \cdot \pb \|_{L^{\infty}(t_{l-1}, t_{l}; H^{k_p+1})} .
}
From these, \eqref{eq:I3-estm}, \eqref{eq:I4-estm}, \eqref{eq:initD-estm}, and the assumption \eqref{eq:initial-assumption1}, we can obtain %\eqref{eq:l2-D-estm}.
\mltln{ \label{eq:method1-D-estm}
\sum_{l=1}^{n-1} \LRp{ \| \Dub^{l} \|_{\Vb}^2 + \| \Dpt^{l} \|_{\Qt}^2 + \| \Dpb^{l} \|_{S}^2 } + \lap t \| \epb^{h,n} \|_A^2 \\
 \lesssim (\lap t)^4 \| \alb \cdot \pb, \pt \|_{W^{2,\infty}(0, t_n; L^2)}^2 + h^{2 k_p} \LRp{ \| \pb \|_{L^\infty(0, t_n; H^{k_p+1})}^2 + \| \pb^0 \|_{H^{k_{\ub}}}^2 } \\
+ h^{2k_{\ub}} \LRp{ \| \pt^0 \|_{H^{k_{\ub}}}^2 + \| \ub^0 \|_{H^{k_{\ub}+1}}^2 } .
}

%\noindent {\bf Estimate of $\| \eu^{h,n} \|_{\Vb} + \| \epb^{h,n} \|_{\Qt} + \| \epb^{h,n} \|_S$}
Now we begin the second step and prove an estimate of $\| \eu^{h,n} \|_{\Vb} + \| \epb^{h,n} \|_{\Qt} + \| \epb^{h,n} \|_S$.
%$\sum_{i=1}^N \| \epi^{h,n+1} \|_{s_i}^2 + \| \alb \cdot \epb^{h,n+1} \|_{\laminv}^2$.
%Note that it is sufficient to estimate this in the case that 
%$\sum_{i=1}^N \| \epi^{h,l} \|_{s_i}^2 + \| \alb \cdot \epb^{h,l} \|_{\laminv}^2$
%is monotonely increasing for $1 \le l \le n+1$. 
%In particular, 
%\algn{ \label{eq:max-assumption}
%\sum_{i=1}^N \| \epi^{h,n+1} \|_{s_i}^2 + \| \alb \cdot \epb^{h,n+1} \|_{\laminv}^2 \ge \sum_{i=1}^N \| \epi^{h,n} \|_{s_i}^2 + \| \alb \cdot \epb^{h,n} \|_{\laminv}^2 .
%}
To do it we add the following three equations: 
\begin{itemize}
 \item[1.] the sum of \eqref{eq:method1-err1a} of indices $l+1$ and $l$ with $\vb = \Dub^l$
 \item[2.] \eqref{eq:method1-err2a} with $\qt = -2\ept^{h,l+\half}$
 \item[3.] the equations \eqref{eq:method1-err3a} with $\qii = - 2 \epi^{h,l+\half}$ for $1 \le i \le N$.
\end{itemize}
From the sum of these equations we get 
\algn{ \label{eq:Xl-form}
&\| \eu^{h,l+1} \|_{\Vb}^2 - \| \eu^{h,l} \|_{\Vb}^2 + \| \alb \cdot \epb^{h,l+1} - \ept^{h,l+1} \|_{\laminv}^2 - \| \alb \cdot \epb^{h,l} - \ept^{h,l} \|_{\laminv}^2 \\
&\notag  + \sum_{i=1}^N \LRp{ \| \epi^{h,l+1} \|_{s_i}^2 - \| \epi^{h,l} \|_{s_i}^2 } + 2\lap t \| \epb^{h,l+\half} \|_A^2  \\
&\notag \quad = 2 \LRa{\laminv \alb \cdot \LRp{ \Ib_1^l - \Ib_1^{l-1} + \Ib_2^l - \Ib_2^{l-1} }, \ept^{h, l+\half} } \\
&\notag \qquad + 2 \lap t \LRs{ S(\Ib_{3}^l, \epb^{h,l+\half})  + \LRa{ \laminv I_4^l, \alb \cdot \epb^{h,l+\half} } }   \\
&\notag \qquad - 2 \LRa{\laminv  \alb \cdot ( \Dpb^{l} - \Dpb^{l-1} ), \ept^{h, l+\half} } . %\\
}
%%%%%%%%%%%%%%%%%%%%%%%%%%%%%%%%%%%%%%%%%%%%%%%%%%%%%%%%%%%%%%%%%%%%%%
%-----
%
%Note that 
%\algns{
%& 2 \sum_{l=1}^{n-1} \LRa{\laminv  \alb \cdot ( \Dpb^{l} - \Dpb^{l-1} ), \ept^{h, l+\half} } \\
%&= \LRa{\laminv \alb \cdot (\Dpb^1 - \Dpb^0), \ept^{h,2} + \ept^{h,1} } + \LRa{\laminv \alb \cdot (\Dpb^2 - \Dpb^1), \ept^{h,3} + \ept^{h,2} } \\
%&\quad + \cdots + \LRa{\laminv \alb \cdot (\Dpb^{n-1} - \Dpb^{n-2}), \ept^{h,n} + \ept^{h,n-1} } \\
%&= - \LRa{ \laminv \alb \cdot \Dpb^0, \ept^{h,2} + \ept^{h,1} } - \LRa{ \laminv \alb \cdot \Dpb^1 , \Dpt^2} \\
%&\quad - \cdots - \LRa{ \laminv \alb \cdot \Dpb^{n-2}, \Dpt^{n-1}} - \LRa{ \laminv \alb \cdot \Dpb^{n-1}, \ept^{h,n} + \ept^{h,n-1} } \\
%&= - \LRa{ \laminv \alb \cdot \Dpb^0, \ept^{h,2} + \ept^{h,1} } - \LRa{ \laminv \alb \cdot \Dpb^{n-1}, \ept^{h,n} + \ept^{h,n-1} } \\
%&\quad - \sum_{l=1}^{n-2} \LRa{ \laminv \alb \cdot \Dpb^{l}, \Dpt^{l+1} } .
%}
%
%-----
%%%%%%%%%%%%%%%%%%%%%%%%%%%%%%%%%%%%%%%%%%%%%%%%%%%%%%%%%%%%%%%%%%%%%%
Defining $X_l, Y_l \ge 0$ as 
\algn{
\label{eq:Xn} X_l^2 &= \| \eu^{h,l} \|_{\Vb}^2 + \| \alb \cdot \epb^{h,l} - \ept^{h,l} \|_{\laminv}^2 + \sum_{i=1}^N \| \epi^{h,l} \|_{s_i}^2 , \\
\label{eq:Yn} Y_l &= \sqrt{\lap t} \| \epb^{h,l+\half} \|_A ,
}
the left-hand side of the above equality is $X_{l+1}^2 - X_l^2 + Y_l^2$. 
%\algns{
%X_{n+1}^2 - X_n^2 + Y_n^2 &=  2 \lap t \LRs{ \LRp{ S \LRp{ \Ib_{3}^n, \epi^{h,n+\half} }  + \LRa{ \laminv  I_4^n, \alb \cdot \epb^{h,n+\half} } }  } \\
%& \quad + \LRa{\laminv \Dpt^n  , \alb \cdot ( \Dpb^{n} + \Dpb^{n-1} ) } .
%}
%Noting that $\epb^{h,l+1} - \epb^{h,l-1} = \Dpb^l + \Dpb^{l-1}$, 
Noting that we can obtain $\| \ept^{h,l} \|_{\Qt} \lesssim \| \eu^{h,l} \|_{\Vb}$ from \eqref{eq:method1-err1a}, we can also show that 
\algn{ \label{eq:max-form}
\max \{ \| \ept^{h,l} \|_{\laminv}, \| \epb^{h,l} \|_S \} \le C_2 X_l
}
by the triangle inequality and the definition of $\| \cdot \|_S$ with some $C_2>0$ independent of $h$. If we use the formula  
\algns{
& - 2 \sum_{l=1}^{n-1} \LRa{\laminv  \alb \cdot ( \Dpb^{l} - \Dpb^{l-1} ), \ept^{h, l+\half} } \\
&= 2 \LRa{ \laminv \alb \cdot \Dpb^0, \ept^{h,\frac 32} } + 2 \LRa{ \laminv \alb \cdot \Dpb^{n-1}, \ept^{h,n-\half} }  + \sum_{l=1}^{n-2} \LRa{ \laminv \alb \cdot \Dpb^{l}, \Dpt^{l+1} } 
}
obtained by a summation by parts argument, taking the summation of \eqref{eq:Xl-form} for $l$ over $1$ to $n-1$ gives 
\algns{
&X_{n}^2 + \sum_{l=1}^{n-1} Y_l^2 \\
&= X_1^2 + 2 \sum_{l=1}^{n-1} \LRa{\laminv \alb \cdot \LRp{ \Ib_1^l - \Ib_1^{l-1} + \Ib_2^l - \Ib_2^{l-1} }, \ept^{h, l+\half} }  \\
& \quad + 2 \lap t \sum_{l=1}^{n-1} \LRs{ \LRp{ S\LRp{ \Ib_{3}^l, \epb^{h,l+\half} } + \LRa{ \laminv I_4^l, \alb \cdot \epb^{h,l+\half} } }  } \\
& \quad + 2 \LRa{ \laminv \alb \cdot \Dpb^0, \ept^{h,\frac 32} } + 2 \LRa{ \laminv \alb \cdot \Dpb^{n-1}, \ept^{h,n-\half} }  + \sum_{l=1}^{n-2} \LRa{ \laminv \alb \cdot \Dpb^{l}, \Dpt^{l+1} } . 
%&\quad + \delta_{n1} \LRp{ 2 \LRa{2\mu \e(\eu^{I,0}), \e(\eu^{h,\half})} - 2\LRa{\ept^{I,0}, \div \eu^{h,\half} } - \LRa{\div \eu^{I,0}, \Dpt^0} - \LRa{\laminv \ept^{I,0}, \Dpt^0} } .
}
If $X_n < \max_{1 \le l \le n} X_l$, then it suffices to estimate $X_{n_0}$ for the smallest $1 \le n_0 < n$ such that $X_{n_0} = \max_{1 \le l \le n_0} X_l$ because the estimate of $X_{n_0}$ ($>X_n$) will also give an estimate of $X_n$ which can be used to obtain \eqref{eq:l2-estm}.
Therefore we will show an estimate of $X_n$ below with the assumption $X_n = \max_{1 \le l \le n} X_l$.

By the Cauchy--Schwarz inequality and \eqref{eq:max-form}, 
\algns{
X_{n}^2 + \sum_{l=1}^{n-1} Y_l^2 &\le 2 C_2 \sum_{l=1}^{n-1} \| \alb \cdot \LRp{ \Ib_1^l - \Ib_1^{l-1} + \Ib_2^l - \Ib_2^{l-1} } \|_{\laminv} X_n \\
& \quad + \LRp{ 2 \lap t C_2 \sum_{l=1}^{n-1} \LRs{ \| \Ib_{3}^l \|_{S} + \| I_4^l \|_{\laminv} }  } X_n \\
& \quad + 2 C_2 \LRp{\| \alb \cdot \Dpb^0 \|_{\laminv} + \| \alb \cdot \Dpb^{n-1} \|_{\laminv} } X_n \\ 
& \quad + X_1^2 + \sum_{l=1}^{n-2} \LRs{ \| \Dpt^{l+1} \|_{\laminv}^2  + \| \alb \cdot \Dpb^{l} \|_{\laminv}^2 } .
%+ \| \alb \cdot \Dpb^0 \|_{\laminv}^2 + \| \eu^{I,0} \|_{\Vb}^2 \\
%& \quad + \| \ept^{I,0} \|_{\laminv}^2 + \| \Dpt^0 \|_{0}^2 .
}
This inequality has a form of $A^2 + B^2 \le CA + D^2$ with $A = X_n$, $B = \sum_{l=1}^{n-1} Y_l$,
\algns{
C &= 2 C_2 \sum_{l=1}^{n-1} \LRs{\| \alb\cdot \LRp{\Ib_1^l - \Ib_1^{l-1} + \Ib_2^l - \Ib_2^{l-1} } \|_{\laminv} + \lap t \LRp{ \| \Ib_3^l \|_S + \| I_4^l \|_{\laminv} } } \\
&\quad  + 2 C_2 \LRp{\| \alb \cdot \Dpb^0 \|_{\laminv} + \| \alb \cdot \Dpb^{n-1} \|_{\laminv} } , \\
D &=  X_1^2 + \sum_{l=1}^{n-2} \LRs{ \| \Dpt^{l+1} \|_{\laminv}^2  + \| \alb \cdot \Dpb^{l} \|_{\laminv}^2 } .
}
We may assume $A, B > 0$ without loss of generality. Then it is easy to show that the above inequality implies
\algns{
\text{either} \quad A + B \le 4C \qquad \text{or} \qquad A + B \le 2 \sqrt{D} .
}
Therefore either 
\mltlns{
X_n + \LRp{ \sum_{l=1}^{n-1} Y_l^2 }^\half \\
\le 8 C_2 \sum_{l=1}^{n-1} \LRs{\| \alb\cdot \LRp{\Ib_1^l - \Ib_1^{l-1} + \Ib_2^l - \Ib_2^{l-1} } \|_{\laminv} + \lap t \LRp{ \| \Ib_3^l \|_S + \| I_4^l \|_{\laminv} } } \\
\quad  + 8 C_2 \LRp{\| \alb \cdot \Dpb^0 \|_{\laminv} + \| \alb \cdot \Dpb^{n-1} \|_{\laminv} }
}
or 
\algns{
X_n + \LRp{ \sum_{l=1}^{n-1} Y_l^2 }^\half 
\le 2 \LRp{ X_1^2 + \sum_{l=1}^{n-2} \LRs{ \| \Dpt^{l+1} \|_{\laminv}^2  + \| \alb \cdot \Dpb^{l} \|_{\laminv}^2 } }^\half  
}
holds. Recall that \eqref{eq:init-estm} gives an estimate of $X_1$. Then from the previous estimates \eqref{eq:I1-diff-estm}, \eqref{eq:I2-diff-estm}, \eqref{eq:I3-estm}, \eqref{eq:I4-estm}, and \eqref{eq:method1-D-estm}, we can conclude that 
\algns{
X_n + \LRp{ \sum_{l=1}^{n-1} Y_l^2 }^\half \lesssim (\lap t)^2 + h^{k_{\ub}} + h^{k_{p}} .
}
This proves \eqref{eq:l2-estm} for $\| \eu^{h,n} \|_{\Vb}$. The estimate for other two terms easily follows because $\| \epb^{h,n} \|_S \le C_2 X_n$ and $\| \ept^{h,n} \|_{\Qt} \lesssim \| \eu^{h,n} \|_{\Vb}$ hold.

%Since either 
%\algns{
%&2 \lap t \sum_{l=1}^n \sum_{i=1}^N \LRs{ \LRp{ \LRa{ s_i \Ib_{3,i}^l, \epi^{h,l+\half} } + \LRa{ \alpha_i \laminv (\alb \cdot \Ib_3^l + I_4^l), \epi^{h,l+\half} } }  } \\
%&> \sum_{l=1}^n \LRa{\laminv \Dpt^l  , \alb \cdot ( \Dpb^{l} + \Dpb^{l-1} ) }
%}
%or 
%\algns{
%&2 \lap t \sum_{l=1}^n \sum_{i=1}^N \LRs{ \LRp{ \LRa{ s_i \Ib_{3,i}^l, \epi^{h,l+\half} } + \LRa{ \alpha_i \laminv (\alb \cdot \Ib_3^l + I_4^l), \epi^{h,l+\half} } }  } \\
%&\le \sum_{l=1}^n \LRa{\laminv \Dpt^l  , \alb \cdot ( \Dpb^{l} + \Dpb^{l-1} ) } ,
%}
%we have either
%\algn{ \label{eq:Xn-case1}
%X_{n}^2 &\le X_1^2 + 4 \lap t \sum_{l=1}^n \sum_{i=1}^N \LRs{ \LRp{ \LRa{ s_i \Ib_{3,i}^l, \epi^{h,l+\half} } + \LRa{ \alpha_i \laminv (\alb \cdot \Ib_3^l + I_4^l), \epi^{h,l+\half} } }  } 
%}
%or 
%\algn{ \label{eq:Xn-case2}
%X_{n}^2 &\le X_1^2 + 2\sum_{l=1}^n \LRa{\laminv \Dpt^l  , \alb \cdot ( \Dpb^{l} + \Dpb^{l-1} ) } .
%}
%For \eqref{eq:Xn-case1}, from the maximality assumption of $X_n$, we have
%\algns{
%X_n \le X_1 + 4 \lap t \sum_{l=1}^n \LRs{ \LRp{ \sum_{i=1}^N \| \Ib_{3,i}^l \|_{s_i}^2 }^\half + \| \alb \cdot \Ib_3^l + I_4^l \|_{\laminv} } .
%} 
%For \eqref{eq:Xn-case2}, the arithmetic-geometric mean inequality and \eqref{eq:l2-D-estm} gives 
%\algns{
%X_n \le X_1 + (\lap t)^2 + (\lap t) h^{k_{\ub}} + \sqrt{\lap t} h^{k_{\pb}} .
%}

\end{proof}

\subsection{The second partitioned algorithm (diffusion-then-elasticity)} \label{subsec:dte}

We now present the second partitioned method. 

\medskip
{\bf Method 2}

Suppose that $(\ubh^0, \pth^0, \pbh^0)$ and $(\ubh^1, \pth^1, \pbh^1)$ are provided by the monolithic numerical method described in the subsection \ref{subsec:prelim}.
\begin{itemize}
 \item[{\bf Step 1}] For given $\pth^n$, $\pth^{n-1}$, compute $\pbh^{n+1}$ from 
\algn{ \label{eq:method2-disc1}
&- \LRa{ s_i \frac{\piih^{n+1} - \piih^n }{\lap t} , \qii} \\
&\notag - \LRa{\alpha_i \laminv \alb \cdot \frac{\pbh^{n+1} - \pbh^n}{\lap t} + \xi_i \LRp{ \pbh^{n+\half} } , \qii } - \ahi \LRp{ \piih^{n+\half} , \qii } \\
&\notag = - \LRa{ g^{n+\half}, \qii} - \LRa{\alpha_i \laminv \frac{\pth^n - \pth^{n-1}}{\lap t}, \qii}
\quad \foralls \qii \in Q_{i,h} ,  1 \le i \le N.
}
 \item[{\bf Step 2}] For given $\pbh^{n+1}$, compute $(\ubh^{n+1}, \pth^{n+1})$ with 
\algn{
\label{eq:method2-disc2} \LRa{ 2\mu \strain(\ubh^{n+1}), \strain(\vb) } - \LRa{\pth^{n+1}, \div \vb} &= \LRa{\fb^{n+1}, \vb}, \\
\label{eq:method2-disc3} - \LRa{\Div \ubh^{n+1}, \qt} - \LRa{\laminv \pth^{n+1}, \qt} &= - \LRa{\laminv \alb \cdot \pbh^{n+1}, \qt},
}
for any $\vb \in \Vb$ and $\qt \in Q_{t,h}$
 \item[{\bf Step 3}] Repeat {\bf Step 1} with $n \leftarrow n+1$
\end{itemize}

%To see the error analysis of this partitioned numerical scheme, let us consider continuous equations,
%\subeqns{eq:method2-conti}{
%\label{eq:method2-conti1} &\LRa{ 2\mu \e(\ub^{n}), \e(\vb) } + \LRa{\pt^{n}, \div \vb} = \LRa{\fb^{n}, \vb}, \\
%\label{eq:method2-conti2} &\LRa{\Div \ub^{n}, \qt} - \LRa{\laminv \pt^{n}, \qt} = \LRa{\laminv \alb \cdot \pb^{n}, \qt}, \\
%\label{eq:method2-conti3} &\LRa{ s_i \frac{\dpii^{n+1} + \dpii^n }{2} , \qii} + \LRa{\alpha_i \laminv \alb \cdot \frac{\dpb^{n+1} + \dpb^n}{2} + S_i \LRp{ \frac{\pb^{n+1} + \pb^n}{2} } , \qii } \\
%&\notag \quad + \ahi \LRp{\frac{\pii^{n+1} + \pii^{n}}{2} , \qii } \\
%&\notag \qquad = \LRa{\frac{g^n + g^{n+1}}{2},\qii} - \LRa{\alpha_i \laminv \frac{\dpt^{n+1} + \dpt^{n}}{2}, \qii}, \qquad  1 \le i \le A,
%}
%for $\vb \in \Vbh$, $\qt \in \Qth$, $\qii \in \Qih$. 

\bigskip

\begin{theorem} Let $(\ub, \pt, \pb)$ be an exact solution of \eqref{eq:mpet:vf} for compatible initial data $(\ub^0, \pt^0, \pb^0)$. For numerical initial data $(\ubh^0, \pth^0, \pbh^0)$ satisfying \eqref{eq:initial-assumption1}, suppose that $\{ (\ubh^n, \pth^n, \pbh^n) \}$ is a numerical solution obtained by {\bf Method 2}. Assuming that the exact solutions are sufficiently regular, the following hold
\algns{
\| \ub^n - \ubh^n \|_{\Vb} + \| \pt^n - \pth^n \|_{\Qt} + \| \pb^n - \pbh^n \|_S &\lesssim (\lap t)^3 + h^{k_{\ub}} + h^{k_p}, \\
\| \pb^n - \pbh^n \|_{A} &\lesssim (\lap t)^{\frac{5}{2}} + (\lap t)^{-\half} (h^{k_{\ub}} + h^{k_p} ). 
}
\end{theorem}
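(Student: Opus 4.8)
The plan is to reproduce the two-step energy argument of Theorem~\ref{thm:method1}, reusing the startup bounds of Theorem~\ref{thm:init-estm} since the monolithic initialization is common to both schemes. First I would form the error equations by subtracting \eqref{eq:method2-disc1}--\eqref{eq:method2-disc3} from the continuous relations \eqref{eq:method1-conti1}--\eqref{eq:method1-conti3} and inserting $\Pi_h^{\Vb},\Pi_h^{\Qt},\Pi_h^{\Qb}$. The decisive structural change from {\bf Method 1} is that here the elasticity--mass (Stokes) block \eqref{eq:method2-disc2}--\eqref{eq:method2-disc3} is solved \emph{exactly} with the current pressure $\pbh^{n+1}$, not in differenced, pressure-lagged form. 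Thus the interpolation error equations become $\LRa{2\mu\strain(\eu^{h,n+1}),\strain(\vb)}-\LRa{\ept^{h,n+1},\div\vb}=0$ and $-\LRa{\Div\eu^{h,n+1},\qt}-\LRa{\laminv\ept^{h,n+1},\qt}=-\LRa{\laminv\alb\cdot(\epb^{h,n+1}+\epb^{I,n+1}),\qt}$, so $\eu^{h,n+1},\ept^{h,n+1}$ are slaved to $\epb^{n+1}$ at the \emph{same} time level. The only surviving time-lag is the total pressure: the diffusion error equation carries $\alpha_i\laminv$ times $\dpt^{l+\half}-(\pth^l-\pth^{l-1})/\lap t$ in place of a centered difference.

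In the first step I would bound the consecutive differences $\Dub^l,\Dpt^l,\Dpb^l$ by differencing the error equations and running the same inf-sup/Stokes stability estimate \eqref{eq:mpet:infsup},\eqref{eq:stokes-stability} with test functions $\vb=\Dub^l+\delta\wb^l$, $\qt=-\Dpt^l$, $\qii=-\Dpi^l$, Young's inequality, and summation over $l$. Crucially, differencing the \emph{exact} mass equation produces a coupling to $\Dpb^l$ and to the interpolation increment $\epb^{I,l+1}-\epb^{I,l}$ only; the $O((\lap t)^2)$ pressure-lag second difference that appears in \eqref{eq:method1-err2a} for {\bf Method 1} is absent. I therefore expect a sharpened bound of the shape $\sum_l(\|\Dub^l\|_{\Vb}^2+\|\Dpt^l\|_{\Qt}^2+\|\Dpb^l\|_S^2)+\lap t\|\epb^{h,n}\|_A^2\lesssim(\lap t)^6+(\lap t)^2 h^{2k_{\ub}}+\lap t\,h^{2k_p}$, two powers of $\lap t$ better than \eqref{eq:l2-D-estm}. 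The stated $A$-norm estimate $\|\pb^n-\pbh^n\|_A\lesssim(\lap t)^{5/2}+(\lap t)^{-\half}(h^{k_{\ub}}+h^{k_p})$ then drops out of the $\lap t\|\epb^{h,n}\|_A^2$ term together with \eqref{eq:intp-estm2}.

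For the second step I would again test the summed elasticity block and the mass difference with Crank--Nicolson increments and the diffusion equations with $-2\epi^{h,l+\half}$, producing the telescoping identity $X_{l+1}^2-X_l^2+Y_l^2=(\text{RHS})$ for the quantities $X_l,Y_l$ of \eqref{eq:Xn}--\eqref{eq:Yn}. Summing over $l$, using the slaving bound $\max\{\|\ept^{h,l}\|_{\laminv},\|\epb^{h,l}\|_S\}\le C_2 X_l$ from \eqref{eq:max-form}, performing a summation by parts on the total-pressure coupling, and closing with the same maximum-index dichotomy ($X_n=\max_l X_l$, then $A+B\le 4C$ or $A+B\le 2\sqrt{D}$) should give $\|\eu^{h,n}\|_{\Vb}+\|\ept^{h,n}\|_{\Qt}+\|\epb^{h,n}\|_S\lesssim(\lap t)^3+h^{k_{\ub}}+h^{k_p}$; combined with \eqref{eq:intp-estm1}--\eqref{eq:intp-estm2} this yields the theorem.

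The main obstacle is upgrading the lagged total-pressure contribution from the naive $O(\lap t)$ to the required order. I would decompose $\lap t\,\dpt^{l+\half}-(\pth^l-\pth^{l-1})$ into the second difference $\pt^{l+1}-2\pt^l+\pt^{l-1}$, a centered remainder controlled as in \eqref{eq:I3-estm}--\eqref{eq:I4-estm}, an interpolation increment $\ept^{I,l}-\ept^{I,l-1}$, and the difference $\Dpt^{l-1}$. The second difference must be handled by Abel summation in time and traded against the sharpened first-step control of $\sum_l\|\Dpt^l\|^2$, while the resulting boundary contributions at $l=n-1$ are absorbed through the exact discrete Stokes relation between $\ept^{h}$ and $\epb^{h}$; the $\Dpt^{l-1}$ term is likewise reabsorbed using Step~1. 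Making this bookkeeping land at $(\lap t)^3$ --- equivalently, establishing the $(\lap t)^6$ first-step bound that feeds it --- is the crux, and it is exactly what imposing the mass constraint \emph{exactly} (rather than in the differenced, lagged form of {\bf Method 1}) provides.
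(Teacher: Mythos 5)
Your proposal follows essentially the same route as the paper's proof: the same error equations (with the elasticity/mass block imposed exactly at each time level and only the total pressure lagged in the diffusion equations), the same two-step energy argument (consecutive-difference estimates via the inf-sup/Stokes stability first, yielding the $(\lap t)^6$ bound and the $A$-norm rate, then the telescoping $X_l, Y_l$ identity closed by the maximum-index dichotomy), and the same summation by parts on the lagged total-pressure coupling. Your explicit decomposition of the lagged term and the Abel-summation treatment of its $O(\lap t)$ leading part make precise a step the paper only describes as ``completely analogous'' to Method 1, but the underlying argument is the same.
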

\begin{remark}
Compared to the elasticity-then-diffusion method, this method has one higher order convergence of time discretization errors.
\end{remark}

\begin{proof} Since the proof is similar to the one of Theorem~\ref{thm:method1}, we sketch the proof without full details. 
As before, interpolation errors denoted by $e_{\sigma}^{I,n}$ for unknown $\sigma$ are of optimal order, so it is enough to show the estimates for the errors of $e_{\sigma}^{h,n}$. 

The differences of \eqref{eq:method1-conti} and \eqref{eq:method2-disc2}, \eqref{eq:method2-disc3}, \eqref{eq:method2-disc1} with index $l$ are 
\subeqns{eq:method2-diff}{
\label{eq:method2-diff1} 
&\LRa{ 2\mu \strain(\eu^l ), \strain(\vb) } - \LRa{\ept^l , \div \vb} = 0 , \\
\label{eq:method2-diff2} 
&-\LRa{\Div \eu^l , \qt} - \LRa{\laminv \ept^{l}, \qt} = - \LRa{\laminv \alb \cdot \epb^{l} , \qt}, \\
\label{eq:method2-diff3} 
& -\LRa{ s_i \LRp{ \dpii^{l+\half} - \frac{\piih^{l+1} - \piih^l}{\lap t} } , \qii} \\
&\notag - \LRa{\alpha_i \laminv \alb \cdot \LRp{ \dpb^{l+\half} - \frac{\pbh^{l+1} - \pbh^{l} }{\lap t} } + \xi_i \LRp{ \epb^{l+\half} } , \qii } - \ahi \LRp{ \epi^{l+\half} , \qii } \\
&\notag \qquad = - \LRa{\alpha_i \laminv \LRp{ \dpt^{l+\half} - \frac{\pth^{l} - \pth^{l-1}}{\lap t} }, \qii}, \qquad  1 \le i \le N.
}
By the interpolation operators $\Pi_h^{\Vb}$, $\Pi_h^{\Qt}$, $\Pi_h^{\Qb}$, and by \eqref{eq:err-decomp}, we can obtain error equations from \eqref{eq:method2-diff} which are 
\subeqns{eq:method2-err}{
\label{eq:method2-err1} 
&\LRa{ 2\mu \strain(\eu^{h,l} ), \strain(\vb) } - \LRa{\ept^{h,l} , \div \vb} = 0 , \\
\label{eq:method2-err2} 
&-\LRa{\Div \eu^{h,l} , \qt} - \LRa{\laminv \ept^{h,l}, \qt} = - \LRa{\laminv \alb \cdot (\Ib_2^{l} + \epb^{h,l}), \qt}, \\
\label{eq:method2-err3} 
&-\LRa{ s_i \Dpi^{l} , \qii} - \LRa{\alpha_i \laminv \alb \cdot \Dpb^l  + \lap t\, \xi_i \LRp{ \epb^{h,l+\half} } , \qii } - \lap t\, \ahi \LRp{\epi^{h,l+\half} , \qii } \\
&\notag \quad = - \lap t \LRp{ \LRa{ s_i \Ib_{3,i}^l, \qii} + \LRa{ \alpha_i \laminv \alb \cdot \Ib_3^l, \qii} } \\
&\notag \qquad - \LRa{\alpha_i \laminv (\lap t I_5^l + \Dpt^{l-1}) , \qii}, \quad  1 \le i \le N
}
for $l \ge 1$ where
\algns{
%I_{1,i} &= \frac{\dpii^{n+1} + \dpii^n }{2} - \frac{\Pi \pii^{n+1} - \Pi \pii^n}{\lap t}, \\
%I_2 &= \alb \cdot \LRp{ \frac{\dpb^{n+1} + \dpb^n}{2} - \frac{\Pi \pb^{n+1} - \Pi \pb^{n} }{\lap t} }, \\
%I_{3,i} &= S_i \LRp{ \frac{\epb^{I,n+1} + \epb^{I,n}}{2} }, \\
I_5^l &=  \dpt^{l+\half} - \frac{\Pi \pt^{l} - \Pi \pt^{l-1}}{\lap t} .
}

% \subeqns{eq:method1-errD}{
% \label{eq:method1-errD1} 
% &\LRa{ 2\mu \e(\Dub^{n} ), \e(\vb) } + \LRa{\Dpt^{n} , \div \vb} = 0 , \\
% \label{eq:method1-errD2} 
% &\LRa{\Div \Dub^{n} , \qt} - \LRa{\laminv \Dpt^{n}, \qt} = \LRa{\laminv \alb \cdot (\epb^{I,n+1} - \epb^{I,n} + \Dpb^{n}), \qt}, \\
% \label{eq:method1-errD3} 
% &\LRa{ s_i \Dpi^{n} , \qii} + \LRa{\alpha_i \laminv \alb \cdot \Dpb^n  + \lap t\, S_i \LRp{ \Spb^n } , \qii } + \lap t\, \ahi \LRp{\Spi^n , \qii } \\
% &\notag \qquad = \lap t ( I_1(\qii) + I_2(\qii) + I_3(\qii) + I_4(\qii) )- \LRa{\alpha_i \laminv  \Dpt^{n-1} , \qii}, \quad  1 \le i \le A.
% }

For the difference of the equations \eqref{eq:method2-err1} and \eqref{eq:method2-err2} with indices $l+1$ and $l$, 
we take $\vb = \Dub^l + \delta \wb^l$, $\qt = -\Dpt^l$, and take $\qii = -\Dpi^l$ in \eqref{eq:method2-err3}.
%\algns{
%& \| \Dpi^{n} \|_{s_i}^2 + \LRa{\laminv \alb \cdot \Dpb^n , \alpha_i \Dpi^n} + \lap t \LRa{\xi_i \LRp{ S_{\pb}^n } , \Dpi^n } + \ahi\LRp{S_{\pii}^n , \Dpi^n } \\
%&\notag \qquad = \lap t \LRp{ \LRa{ s_i \Ib_{3,i}, \Dpi^n} + \LRa{ \alpha_i \laminv \alb \cdot \Ib_3, \Dpi^n} } - \LRa{\laminv  (I_5^n + \Dpt^{n-1}) , \alpha_i \Dpi^n }, \quad  1 \le i \le N.
%}
%Note that $\ahi \LRp{S_{\pii}^n, \Dpi^n } = \half \LRp{\| \epi^{h,n+1} \|_{\ahi}^2 - \| \epi^{h,n} \|_{\ahi}^2 }$ and 
%\algns{
%\sum_{i=1}^A \LRa{\xi_i \LRp{\Spb^n}, \Dpi^n} &= \sum_{i=1}^A \sum_{j=1}^A (\xiji (\Spi^n - S_{p_j}^n), \Dpi^n) \\
%&= \half \sum_{i=1}^A \sum_{j=1}^A \LRs {\| \epi^{h,n+1} - e_{p_j}^{h,n+1} \|_{\xiji}^2 - \| \epi^{h,n+1} - e_{p_j}^{h,n+1} \|_{\xiji}^2 } .
%}
The sum of these equations yield
\algns{
& C_0 \| \Dub^{l} \|_{\Vb}^2 + C_0 \| \Dpt^l \|_{\Qt}^2 + \| \Dpt^l \|_{\laminv}^2 + \| \Dpb^l \|_{S}^2  + \frac{\lap t}2 \| \epb^{h,l+1} \|_A^2 \\
&\quad \le  \frac{\lap t}2 \| \epi^{h,l} \|_A^2 - \LRa{\laminv \alb \cdot (\Ib_2^{l+1} - \Ib_2^{l}) , \Dpt^l } - \LRa{ \laminv \alb \cdot \Dpb^l, \Dpt^l + \Dpt^{l-1} } \\
&\qquad + {\lap t} \LRp{ S \LRp{ \Ib_{3}^l , \Dpi^l} + \LRa{ \laminv I_5^l, \alb \cdot \Dpb^l}  } .
}
We can use an argument completely analogous to the proof of \eqref{eq:l2-D-estm} and get 
\algn{
\notag &\sum_{l=1}^{n-1} \LRp{ \| \Dub^{l} \|_{\Vb}^2 + \| \Dpt^l \|_{\Qt}^2 + \| \Dpb^l \|_{S}^2 } + \lap t \| \epb^{h,n} \|_A^2  \\
\label{eq:method2-D-estm} &\quad \lesssim \sum_{l=1}^{n-1} \LRs{ \| \Ib_2^{l+1} - \Ib_2^l \|_{\laminv}^2 + (\lap t)^2 \LRp{ \| \Ib_3^l \|_S^2 + \| I_5^l \|_{\laminv}^2 } } \\
\notag &\qquad + \lap t \| \epi^{h,0} \|_A^2 +  \| \Dpt^0 \|_{\laminv}^2 \\
\notag &\quad \lesssim (\lap t)^6 + h^{2k_{\ub}} + h^{2k_p}
}
with implicit constants independent of $h$. This gives the assertion on $\| \pb^n - \pbh^n \|_A$. 

We take $\vb = \Dub^{l-1}$ for the sum of \eqref{eq:method2-err1} with indices $l+1$, $l$, and 
$\qt = - 2\ept^{h,l-\half} $ in the difference of the equation \eqref{eq:method2-err2} with indices $l+1$, $l$,
and $\qii = -2 \epi^{h,l+\half} $ in \eqref{eq:method2-err3}. If we add these equations altogether, then we get 
\algns{
& \| \eu^{h,l+1} \|_{\Vb}^2 - \| \eu^{h,l} \|_{\Vb}^2 + \| \ept^{h,l+1} - \alb \cdot \epb^{h,l+1} \|_{\laminv}^2 - \| \ept^{h,l} - \alb \cdot \epb^{h,l} \|_{\laminv}^2  \\
& + \sum_{i=1}^N \LRp{ \| \epi^{h,l+1} \|_{s_i}^2 - \| \epi^{h,l} \|_{s_i}^2 } + 2 \lap t \| \epb^{h,l+\half} \|_A^2  \\
&\quad = \LRa{\laminv \alb \cdot (\Ib_2^{l+1} - \Ib_2^{l}), \ept^{h, l + \half} } + 2 \lap t \LRp{ S\LRp{ \Ib_{3}^l , \epb^{h,l+\half}} + \LRa{ \laminv I_5^l, \alb \cdot \epb^{h,l+\half}}  } \\
&\qquad - 2 \LRa{\laminv  \LRp{ \Dpt^{l-1} - \Dpt^l }  , \alb \cdot \epb^{h,l+\half} } .
}
%With ${X}_{l}^2 := \| \eu^{h,l} \|_{\Vb}^2 + \| \ept^{h,l} - \alb \cdot \epb^{h,l} \|_{\laminv}^2 + \sum_{\| \epi^{h,n+1} \|_{D}^2$ and
With $X_l$, $Y_l$ in \eqref{eq:Xn}, \eqref{eq:Yn}, we have 
\algns{
X_{l+1}^2 - X_l^2 + Y_l^2 &= \LRa{\laminv \alb \cdot (\Ib_2^{l+1} - \Ib_2^{l}), \ept^{h, l+\half} }\\
&\quad + 2 \lap t \LRp{ S \LRp{ \Ib_{3}^l , \epb^{h,l+\half}} + \LRa{ \laminv I_5^l, \alb \cdot \epb^{h,l+\half}}  } \\
&\quad - 2 \LRa{\laminv  \LRp{ \Dpt^{l-1} - \Dpt^l } , \alb \cdot \epb^{h,l+\half} } .
}
Using 
\algns{
&2 \sum_{l=1}^{n-1} \LRa{\laminv \LRp{ \Dpt^{l-1} - \Dpt^l}, \alb \cdot \epb^{h,l+\half } }\\
& = \LRa{ \laminv \Dpt^0, \alb \cdot \epb^{h, \frac{3}{2}} } + \sum_{l=1}^{n-2} \LRa{\laminv \Dpt^l, \alb \cdot \LRp{ \Dpb^{l+1} - \Dpb^l } } - \LRa{ \laminv \Dpt^{n-1}, \alb \cdot \epb^{h, n-\half} } ,
}
by a similar argument for the estimate of $X_n$ in the proof of Theorem~\ref{thm:method1}, we can estimate 
${X}_n + \LRp{ \sum_{l=1}^{n-1} Y_l^2 }^\half$ by a linear combination of $X_1$, 
\algns{
 \sum_{l=1}^{n-1} \LRp{  \| \Ib_2^{l+1} - \Ib_2^{l} \|_{\laminv} + 8 \lap t \LRs{ \| \Ib_{3}^l \|_{S} + \| I_5^l \|_{\laminv} } } , \\
 \| \Dpt^0 \|_{\laminv} + \LRp{ \sum_{l=1}^{n-1} \LRs{ \| \Dpt^l \|_{\laminv}^2  + \| \alb \cdot \Dpb^{l} \|_{\laminv}^2 } }^\half.
}
These terms are bounded by $(\lap t)^3 + h^{k_{\ub}} + h^{k_p}$ by the estimates of interpolation error terms, \eqref{eq:method2-D-estm}, and Theorem~\ref{thm:init-estm}. As in the proof of Theorem~\ref{thm:method1} we can prove the asserted error estimates from the estimate of $X_n$. 
\end{proof}

%\input{./results/two-net-partitioned_dte1.tex}
%\input{./results/two-net-partitioned_etd1.tex}

%% method = method=dte
%% deg = 1
\begin{table}[ht]
	\begin{center}
		\begin{tabular}{c | c c | c c | c c | c c} 
			\multirow{3}{*}{$M$} & \multicolumn{2}{c}{$\| \ub - \ubh \|_{1}$}& \multicolumn{2}{c}{$\| \pt - \pth \|_0$}& \multicolumn{2}{c}{$\| p_1 - p_{1,h} \|_1$}& \multicolumn{2}{c}{$\| p_2 - p_{2,h} \|_1$}\\ 
\cline{2-9}& error & rate & error & rate & error & rate & error & rate \\
\hline
8 & 1.290e+0  & -  & 2.146e-1  & -  & 2.661e-1  & -  & 5.323e-1  & - \\
16 & 3.195e-1  & 2.01  & 3.898e-2  & 2.46  & 1.865e-1  & 0.51  & 3.729e-1  & 0.51 \\
32 & 7.700e-2  & 2.05  & 8.856e-3  & 2.14  & 1.059e-1  & 0.82  & 2.118e-1  & 0.82 \\
64 & 1.872e-2  & 2.04  & 2.154e-3  & 2.04  & 5.599e-2  & 0.92  & 1.120e-1  & 0.92 \\
128 & 4.603e-3  & 2.02  & 5.333e-4  & 2.01  & 2.873e-2  & 0.96  & 5.747e-2  & 0.96 \\
		\hline
		\end{tabular} 
		\caption{The errors and convergence rates given by the diffusion-then-elasticity scheme with $k=1$} 
		\label{table:dte1} 
	\end{center} 
\end{table} 

%% method = method=etd
%% deg = 1
\begin{table}[th]
	\begin{center}
		\begin{tabular}{c | c c | c c | c c | c c} 
			\multirow{3}{*}{$M$} & \multicolumn{2}{c}{$\| \ub - \ubh \|_{1}$}& \multicolumn{2}{c}{$\| \pt - \pth \|_0$}& \multicolumn{2}{c}{$\| p_1 - p_{1,h} \|_1$}& \multicolumn{2}{c}{$\| p_2 - p_{2,h} \|_1$}\\ 
\cline{2-9}& error & rate & error & rate & error & rate & error & rate \\
\hline
8 & 1.290e+0  & -  & 2.146e-1  & -  & 2.661e-1  & -  & 5.323e-1  & - \\
16 & 3.195e-1  & 2.01  & 3.898e-2  & 2.46  & 1.865e-1  & 0.51  & 3.729e-1  & 0.51 \\
32 & 7.700e-2  & 2.05  & 8.856e-3  & 2.14  & 1.059e-1  & 0.82  & 2.118e-1  & 0.82 \\
64 & 1.872e-2  & 2.04  & 2.154e-3  & 2.04  & 5.599e-2  & 0.92  & 1.120e-1  & 0.92 \\
128 & 4.603e-3  & 2.02  & 5.333e-4  & 2.01  & 2.873e-2  & 0.96  & 5.747e-2  & 0.96 \\
		\hline
		\end{tabular} 
		\caption{The errors and convergence rates given by the elsticity-then-diffusion scheme with $k=1$} 
		\label{table:etd1} 
	\end{center} 
\end{table}

%-------------------------------------------------------------------------------
%-------------------------------------------------------------------------------
\section{Numerical convergence experiments}
\label{sec:numerics:convergence}

In this section, we present a set of numerical results to illustrate
the performances of the new partitioned methods. For this we construct a manufactured smooth solution 
and compute convergence of errors of the numerical solutions.
All numerical simulations in this section were run using the FEniCS
finite element software~\cite{AlnaesEtAl2015} (version 2017.2.0). 
For simplicity we only consider the two-network case, i.e., $N=2$ in our numerical experiments.
The exact solutions in our experiments are
\algns{
\ub &= \pmat{\sin(2 \pi y) (-1 + \cos(2 \pi x)) \sin t + (\mu + \lambda)^{-1} \sin(\pi x) \sin(\pi y)) \sin t \\
         \sin(2 \pi x) (1 - \cos(2 \pi y)) \sin t + (\mu + \lambda)^{-1} \sin(\pi x) \sin(\pi y)) \sin t }, \\
p_1 &= -\sin(\pi x) \sin(\pi y) \cos t, \\
p_2 &= -2 \sin(\pi x) \sin(\pi y) \cos t .
}
The physical parameters are given as $E = 1.0$, $\nu = 0.49999$, $\alpha_1 = \alpha_2 = 1$, $s_1 = s_2 = 1$, $K_1 = K_2 = 1$, 
and corresponding $\mu$ and $\lambda$ are computed by $\mu = E/(2(1+\nu))$ and $\lambda = \nu E/((1 - 2\nu)(1+\nu))$.
The domain $\Omega$ is $[0,1] \times [0,1]$. To check convergence of errors we use nested structured triangular meshes obtained by dividing $\Omega$ into $M \times M$ rectangles ($M = 8, 16, 32, 64, 128$), i.e., the relation $h \sim 1/M$ holds with uniform implicit constants.  
For simplicity we impose Dirichlet boundary conditions of $\ub$ on the vertical sides of $\Omega$ whereas we impose Dirichlet boundary conditions of $p_1$ and $p_2$ on the whole boundary of $\Omega$. The time step size $\lap t$ is taken as $\lap t = 1/M$ in all experiments, so $\lap t \sim h$ holds. 
In all the numerical experiments we use the Taylor--Hood elements with polynomial degrees of $k+1$ and $k$ for $\Vbh$ and $\Qth$, and the Lagrange finite elements of degree $k$ for $Q_{1,h}$ and $Q_{2,h}$. 

In Table~\ref{table:dte1} and Table~\ref{table:etd1}, we present the errors of variables computed at $t=1$ with $k=1$ for the diffusion-then-elasticity (DTE) and the elasticity-then-diffusion (ETD) schemes, respectively. The convergence rates of the errors of $p_1$ and $p_2$ in the $H^1$ norm are bounded by 1 because of the best approximation property of linear polynomials. Other errors show convergence rates higher than the rates expected by our error analysis.

%\input{./results/two-net-partitioned_dte2.tex}
%\input{./results/two-net-partitioned_etd2.tex}

%% method = method=dte
%% deg = 2
\begin{table}[th]
	\begin{center}
		\begin{tabular}{c | c c | c c | c c | c c} 
			\multirow{3}{*}{$M$} & \multicolumn{2}{c}{$\| \ub - \ubh \|_{1}$}& \multicolumn{2}{c}{$\| \pt - \pth \|_0$}& \multicolumn{2}{c}{$\| p_1 - p_{1,h} \|_1$}& \multicolumn{2}{c}{$\| p_2 - p_{2,h} \|_1$}\\
\cline{2-9}& error & rate & error & rate & error & rate & error & rate \\
\hline
8 & 2.682e-1  & -  & 3.405e-2  & -         & 4.082e-2  & -     & 8.165e-2  & - \\
16 & 3.153e-2  & 3.09  & 3.615e-3  & 3.24  & 1.440e-2  & 1.50  & 2.880e-2  & 1.50 \\
32 & 3.698e-3  & 3.09  & 4.082e-4  & 3.15  & 4.098e-3  & 1.81  & 8.196e-3  & 1.81 \\
64 & 4.451e-4  & 3.05  & 4.865e-5  & 3.07  & 1.084e-3  & 1.92  & 2.168e-3  & 1.92 \\
128 & 5.454e-5  & 3.03  & 5.943e-6  & 3.03  & 2.781e-4  & 1.96  & 5.563e-4  & 1.96 \\
		\hline
		\end{tabular} 
		\caption{The errors and convergence rates given by the diffusion-then-elasticity scheme with $k=2$} 
		\label{table:dte2} 
	\end{center} 
\end{table} 

%% method = method=etd
%% deg = 2
\begin{table}[th]
	\begin{center}
		\begin{tabular}{c | c c | c c | c c | c c} 
			\multirow{3}{*}{$M$} & \multicolumn{2}{c}{$\| \ub - \ubh \|_{1}$}& \multicolumn{2}{c}{$\| \pt - \pth \|_0$}& \multicolumn{2}{c}{$\| p_1 - p_{1,h} \|_1$}& \multicolumn{2}{c}{$\| p_2 - p_{2,h} \|_1$}\\
\cline{2-9}& error & rate & error & rate & error & rate & error & rate \\
\hline
8 & 2.682e-1   & -     & 3.405e-2  & -     & 4.082e-2  & -     & 8.165e-2  & - \\
16 & 3.153e-2  & 3.09  & 3.615e-3  & 3.24  & 1.440e-2  & 1.50  & 2.880e-2  & 1.50 \\
32 & 3.698e-3  & 3.09  & 4.082e-4  & 3.15  & 4.098e-3  & 1.81  & 8.196e-3  & 1.81 \\
64 & 4.451e-4  & 3.05  & 4.865e-5  & 3.07  & 1.084e-3  & 1.92  & 2.168e-3  & 1.92 \\
128 & 5.454e-5  & 3.03  & 5.943e-6  & 3.03  & 2.781e-4  & 1.96  & 5.563e-4  & 1.96 \\
		\hline
		\end{tabular} 
		\caption{The errors and convergence rates given by the elsticity-then-diffusion scheme with $k=2$} 
		\label{table:etd2} 
	\end{center} 
\end{table} 

To confirm the second order convergence in time, we test the numerical methods with $k=2$, and the results for DTE and ETD schemes are given in Table~\ref{table:dte2} and Table~\ref{table:etd2}. In the results we observe that some errors show the convergence rates which are higher than the rates expected by the error analysis. The expected convergence rates of $\| p_1 - p_{1,h} \|_1$ and $\| p_2 - p_{2,h} \|_1$ are $\frac 32$ in the error analysis but the numerical results show second order convergence. We conjecture that there is an improved way to analyze the errors with second order convergence of these errors but we leave it as a future research work. The convergence rates of the errors $\| \ub - \ubh \|_1$ and $\| \pt - \pth \|_0$ are also not covered by the present error analysis. Note that these errors have $\frac 12$ higher order convergence rates in the current error analysis and note also that some time error terms are of orders higher than 2 (see \eqref{eq:I3-estm} and \eqref{eq:I4-estm}). Thus we believe that the improved error analysis in our conjecture can be used to obtain these higher convergence rates of $\| \ub - \ubh \|_1$ and $\| \pt - \pth \|_0$. 

%The previous numerical results show that second order may not be the optimal convergence rate of time discretization errors, so 
Finally, we run experiments with $k=3$ and the results are presented in Table~\ref{table:dte3} and Table~\ref{table:etd3}. Interestingly, the convergence rates of $\| p_1 - p_{1,h} \|_1$ and $\| p_2 - p_{2,h} \|_1$ are approximately $\frac 52$ but those of $\| \ub - \ubh \|_1$ and $\| \pt - \pth \|_0$ are asymptotically 4. The analysis in Subsection~\ref{subsec:dte} explains the $\frac 52$ convergence of $\| p_1 - p_{1,h} \|_1$ and $\| p_2 - p_{2,h} \|_1$ in the DTE scheme but it still cannot explain the superconvergence of $\| \ub - \ubh \|_1$ and $\| \pt - \pth \|_0$ of order 4. In the ETD scheme, the superconvergent errors are observed and they are beyond the scope of the analysis in Subsection~\ref{subsec:etd}. 
%These higher order convergence rates are unexpected because it is usually believed that the monolithic time step with the Crank-Nicolson method have time discretization errors at most of second order.
At present we have no theory to explain this superconvergence, so it will be investigated in our future research.

%\input{./results/two-net-partitioned_dte3.tex}
%\input{./results/two-net-partitioned_etd3.tex}

%% method = method=dte
%% deg = 3
\begin{table}[h]
	\begin{center}
		\begin{tabular}{c | c c | c c | c c | c c} 
			\multirow{3}{*}{$M$} & \multicolumn{2}{c}{$\| \ub - \ubh \|_{1}$}& \multicolumn{2}{c}{$\| \pt - \pth \|_0$}& \multicolumn{2}{c}{$\| p_1 - p_{1,h} \|_1$}& \multicolumn{2}{c}{$\| p_2 - p_{2,h} \|_1$}\\ 
\cline{2-9}& error & rate & error & rate & error & rate & error & rate \\
\hline
8 & 4.942e-2  & -      & 8.388e-3  & -     & 4.240e-3  & -     & 8.479e-3  & - \\
16 & 3.108e-3  & 3.99  & 4.581e-4  & 4.19  & 7.292e-4  & 2.54  & 1.458e-3  & 2.54 \\
32 & 1.888e-4  & 4.04  & 2.626e-5  & 4.12  & 1.058e-4  & 2.78  & 2.114e-4  & 2.79 \\
64 & 1.150e-5  & 4.04  & 1.559e-6  & 4.07  & 1.556e-5  & 2.77  & 3.092e-5  & 2.77 \\
128 & 7.069e-7  & 4.02  & 9.467e-8  & 4.04  & 2.719e-6  & 2.52  & 5.280e-6  & 2.55 \\
		\hline
		\end{tabular} 
		\caption{The errors and convergence rates given by the diffusion-then-elasticity scheme with $k=3$} 
		\label{table:dte3} 
	\end{center} 
\end{table} 

%% method = method=etd
%% deg = 3
\begin{table}[h]
	\begin{center}
		\begin{tabular}{c | c c | c c | c c | c c} 
			\multirow{3}{*}{$M$} & \multicolumn{2}{c}{$\| \ub - \ubh \|_{1}$}& \multicolumn{2}{c}{$\| \pt - \pth \|_0$}& \multicolumn{2}{c}{$\| p_1 - p_{1,h} \|_1$}& \multicolumn{2}{c}{$\| p_2 - p_{2,h} \|_1$}\\ 
\cline{2-9}& error & rate & error & rate & error & rate & error & rate \\
\hline
8 & 4.942e-2  & -      & 8.388e-3  & -     & 4.240e-3  & -     & 8.479e-3  & - \\
16 & 3.108e-3  & 3.99  & 4.581e-4  & 4.19  & 7.292e-4  & 2.54  & 1.458e-3  & 2.54 \\
32 & 1.888e-4  & 4.04  & 2.626e-5  & 4.12  & 1.058e-4  & 2.78  & 2.114e-4  & 2.79 \\
64 & 1.150e-5  & 4.04  & 1.559e-6  & 4.07  & 1.556e-5  & 2.77  & 3.092e-5  & 2.77 \\
128 & 7.069e-7  & 4.02  & 9.467e-8  & 4.04  & 2.719e-6  & 2.52  & 5.280e-6  & 2.55 \\
		\hline
		\end{tabular} 
		\caption{The errors and convergence rates given by the elsticity-then-diffusion scheme with $k=3$} 
		\label{table:etd3} 
	\end{center} 
\end{table}

%-------------------------------------------------------------------------------
%-------------------------------------------------------------------------------
\section{Conclusions}
\label{sec:conclusion}

In this paper, we have presented two partitioned time discretization schemes
for the total-pressure-based formulation of quasi-static multiple-network poroelasticity.
We proved that the partitioned schemes are unconditionally stable and have second and third order 
convergence in time by a novel error analysis. The analyses also show that the numerical 
schemes are still robust in the limit of incompressibility and other parameter variations such as small storage coefficients. 
We also presented a number of numerical experiments to illustrate the validaty of our theoretical analysis but the numerical results often show superconvergence results in time discretization errors which are not completely understood by the current error analysis.
Thus, an improved error analysis for the schemes will be investigated in the future work.

%Our formulation introduces a single additional scalar field
%unknown, the total pressure. We prove, via energy and semi-discrete
%\emph{a priori} error estimates, that this formulation is robust in
%the limits of incompressibility ($\lambda \rightarrow \infty$) and
%vanishing storage coefficients ($c_j \rightarrow 0$), in contrast to
%standard formulations. Finally, numerical experiments support the
%theoretical results. For the numerical experiments presented here, we
%have used direct linear solvers. In future work, we will address
%iterative solvers and preconditioning of the MPET equations.

%-------------------------------------------------------------------------------
%-------------------------------------------------------------------------------

%\algns{
%&\half (f'(h) + f'(0)) - \frac{f(0) - f(-h)}{h} \\
%&= f'(0) + \half h f''(0) + \half \frac{1}{2!} h^2 f'''(0) + \cdots \\
%&\quad - \frac{1}{h} \LRp{ h f'(0) - \frac{h^2}{2} f''(0) + \frac{h^3}{3!} f'''(0) + \cdots }
%}

%\FloatBarrier

\bibliographystyle{amsplain}
%\bibliography{references}

\end{document}